\theoremstyle{plain}
\newtheorem{theorem}{Theorem}[section]
\newtheorem{corollary}[theorem]{Corollary}
\newtheorem{lemma}[theorem]{Lemma}
\newtheorem{proposition}[theorem]{Proposition}
\theoremstyle{definition}
\theoremstyle{remark}
\newtheorem*{remark}{Remark}
\newtheorem*{example}{Example}
\numberwithin{equation}{section}
\newcommand{\R}{\mathbb R}
\newcommand{\N}{\mathbb N}
\newcommand{\Z}{\mathbb Z}
\newcommand{\C}{\mathbb C}
\newcommand{\FOne}{\overline{U}}
\newcommand{\FTwo}{\overline{U2}}
\newcommand{\FThree}{U2}
\newcommand{\fone}{\overline{u}}
\newcommand{\ftwo}{\overline{u2}}
\newcommand{\fthree}{u2}
\newcommand{\Arg}{\operatorname{Arg}}
\begin{document}
\allowdisplaybreaks

\subjclass[2010]{05A16, 11F03, 11P81, 11P82}
\keywords{unimodal sequences, strongly unimodal sequences, partitions, overpartitions,
unimodal ranks, partition ranks, 
Dyson rank, $M_2$-rank, asymptotics, modular forms, mock modular forms}

\begin{abstract}
In this paper we study generating functions resembling the rank of strongly
unimodal sequences. We give combinatorial interpretations, 
identities in terms of mock modular forms, asymptotics, and a parity result.
Our functions imitate a relation between the rank of strongly unimodal
sequences and the rank of integer partitions. 
\end{abstract}

\title{Unimodal sequence generating functions arising from partition ranks}
\author[K. Bringmann]{Kathrin Bringmann}
\author[C. Jennings-Shaffer]{Chris  Jennings-Shaffer}

\address{University of Cologne, Faculty of Mathematical and Natural Sciences, Mathematical Institute, Weyertal 86-90, 50931 Cologne, Germany}
\email{kbringma@math.uni-koeln.de}
\email{cjenning@math.uni-koeln.de}

\thanks{\hspace{-0.6cm} The research of the first author is supported by the Alfried Krupp Prize for Young University Teachers of the Krupp foundation and the research leading to these results receives funding from the European Research Council under the European Union's Seventh Framework Programme (FP/2007-2013) / ERC Grant agreement n. 335220 - AQSER}

\thispagestyle{empty} \vspace{.5cm}
\maketitle

\section{Introduction and statement of results}

A sequence $\{a_j\}_{j=1}^s$ of positive integers is a {\it unimodal 
sequence of size $n$} if it is of the form
\begin{equation*}
a_1\le a_2\le \dotsb \le a_k \ge a_{k+1} \ge \dotsb \ge a_{s}
\qquad\mbox{and}\qquad
a_1+ a_2+ \dotsb + a_{s} = n.
\end{equation*}
The maximum value, $a_k$, is called the {\it peak}.
If the inequalities are strict, the sequence is called {\it strongly unimodal}.
Such sequences are related to integer partitions. Recall that a finite 
sequence $\{a_j\}_{j=1}^s$ of positive integers is a {\it partition of 
size $n$} if it is of the form
\begin{equation*}
a_1\ge a_2\ge \dotsb \ge a_{s}
\qquad\mbox{and}\qquad
a_1+ a_2+ \dotsb + a_{s} = n.
\end{equation*}

Unimodal sequences, partitions, and similar objects appear
throughout modern and classical literature on a variety of subjects including
algebra, combinatorics, number theory, physics, and special functions
\cite{Andrews1, Andrews2, Stanley1, Stanley2}. To 
motivate our results, we discuss a few highlights in 
number theory. We focus on strongly unimodal sequences, rather than
unimodal sequences. The enumeration function for strongly unimodal sequences 
seems to have first appeared as $X_d(n)$ in \cite{Andrews1}, along with a 
wealth of related functions;
unimodal sequence type counting functions can also be found in older works
such as \cite{Andrews4, Auluck1, Wright1}.

We let $p(n)$ denote the number of partitions of size $n$
and let $u(n)$ denote the number of strongly unimodal sequences of size $n$. We note that the conventions for zero are somewhat inconsistent 
between strongly unimodal sequences and partitions, as we set
$u(0):=0$ but $p(0):=1$.
By standard counting 
techniques, the generating functions for partitions and strongly unimodal sequences are 
\begin{align*}
P(q) 
:= 
	\sum_{n\ge0} p(n)q^n 
	=
	\sum_{n\ge0} \frac{q^{n^2}}{(q;q)_{n}^2}
	=
	\frac{1}{(q;q)_\infty}		
, \quad 
U(q) 
:= 
	\sum_{n\ge1} u(n)q^n 
	=
	\sum_{n\ge1} (-q;q)_{n-1}^2 q^{n}
,
\end{align*}
where we use the {\it $q$-Pochhammer symbol}, 
$(a;q)_n:=\prod_{j=0}^{n-1}(1-aq^j)$ for $n\in\N_0\cup\{\infty\}$.
This extends to arbitrary $n$ by setting
$(a;q)_n: = \frac{(a;q)_\infty}{(aq^n;q)_\infty}$.
Furthermore, we let
$(a_1,a_2,\dots,a_k;q)_n := (a_1;q)_n (a_2;q)_n \dotsm (a_k;q)_n$.
Throughout the article, $q$ is a complex
variable with $0<|q|<1$.

Perhaps the three most famous results for the partition function
are the following. There is the asymptotic formula of Hardy and
Ramanujan \cite[equation (1.41)]{HardyRamanujan1}
\begin{align}\label{Eq:PartitionAsymptotic}
p(n) &\sim \frac{1}{4\sqrt{3n}}e^{\pi\sqrt{\frac{2n}{3}}}
,\quad \mbox{as }n\rightarrow\infty.
\end{align}
Moreover the function 
\begin{align*}
\eta(\tau)
&:=
q^{\frac{1}{24}}\prod_{n\ge1}\left(1-q^n\right)
	=
\frac{q^{\frac{1}{24}}}{P(q)}
,\qquad
(q:=e^{2\pi i\tau} \mbox{ throughout}),
\end{align*}
is Dedekind's eta function, which is a modular form of weight $\frac12$ (with multiplier).
Lastly, there are Ramanujan's congruences \cite{Ramanujan1} 
\begin{align*}
p(5n+4)\equiv0\pmod{5},\qquad
p(7n+5)\equiv0\pmod{7},\qquad
p(11n+6)\equiv0\pmod{11}.
\end{align*}

By \cite[Corollary 1.2]{Rhoades1}, $u(n)$ has a similar asymptotic behavior to 
\eqref{Eq:PartitionAsymptotic}
\begin{align*}
u(n) &\sim \frac{1}{8\cdot 6^{\frac14}n^{\frac34}}e^{\pi\sqrt{\frac{2n}{3}}}
,\quad \mbox{as }n\rightarrow\infty.
\end{align*}
However, $U(q)$ is essentially a mixed mock modular form instead of a modular form.
A {\it mock modular form} 
is the holomorphic part of a harmonic Maass form 
with nontrivial non-holomorphic part.
A {\it harmonic Maass form} is a function that 
transforms like a modular form, satisfies similar growth conditions, 
but needs to only be smooth and annihilated by the weighted 
hyperbolic Laplacian.
A {\it mixed mock modular form} is basically an element of the tensor space of
modular forms and mock modular forms.
These terms and their encompassing theory can be found in \cite{BringmannFolsomOnoRolen1}.
While $u(n)$ does not satisfy congruences as elegant as those of $p(n)$,
it turns out that
\begin{align*}
u\left( \ell^2n + k\ell + \tfrac{1-\ell^2}{24} \right)\equiv0\pmod{2},
\end{align*}
for any prime $\ell$ satisfying $\ell\not\equiv3,23\pmod{24}$ and $\ell\nmid k$
\cite[Theorem 1.4]{BrysonOnoPitmanRhoades1}.

Both partitions and strongly unimodal sequences have a statistic defined
on them called the rank. The {\it rank of a partition} is the largest
part minus the number of parts. The {\it rank of a strongly unimodal
sequence} is the number of terms after the peak minus the number of terms
before the peak. 
We note that the peak is unique for a strongly unimodal sequence,
and so there is no ambiguity in this definition as we might 
have with ordinary unimodal sequences. 
We let $N(m,n)$ denote the number of partitions of size $n$ with rank $m$ and
let $u(m,n)$ denote the number of strongly unimodal sequences of size $n$ with 
rank $m$.
Again by standard counting techniques
we have that the relevant generating functions are given by
\begin{align*}
R(\zeta;q) 
&:= 
	\sum_{\substack{n\ge0\\ m\in\mathbb{Z}}} N(m,n)\zeta^m q^n 
	=
	\sum_{n\ge0} \frac{q^{n^2}}{\left(\zeta q,\zeta^{-1}q;q\right)_{n}}
,\\
U(\zeta;q) 
&:= 
	\sum_{\substack{n\ge1\\ m\in\mathbb{Z}}} u(m,n)\zeta^mq^n 
	=
	\sum_{n\ge1} \left(-\zeta q,-\zeta^{-1}q;q\right)_{n-1} q^{n}
.
\end{align*}
The rank for strongly unimodal sequences is somewhat new and first appeared
in \cite{BrysonOnoPitmanRhoades1}. However, the rank of partitions
has a longer history. It was introduced by Dyson \cite{Dyson1} in an attempt
to provide a combinatorial refinement for the Ramanujan congruences modulo $5$ and $7$, which came to full
fruition in \cite{AtkinSwinnertonDyer1}. 
Both $R(\zeta;q)$ and $U(\zeta;q)$ are of considerable interest because of their
modular properties. All of Ramanujan's third order mock theta functions
are specializations of $R(\zeta;q)$ with $\zeta$ taken to be a root of unity
multiplied by a fractional power of $q$; 
that specializations of this type are always mock modular forms
was established in \cite{BringmannOno1}. 
Also, $U(-1;q)$ is one of the most well known examples of a quantum modular form,
which are explained at the end of Section 4.

A key link between the rank of partitions and the rank of strongly unimodal
sequences is the relation between the summands in their generating functions. In
particular,
\begin{align}\label{Eq:RankSummands}
\frac{q^{(-n)^2}}{\left(\zeta q,\zeta^{-1}q;q\right)_{-n}}
&=
	\left(\zeta,\zeta^{-1};q\right)_nq^{n}
.
\end{align}
It is through this connection that one can easily explain the mock modular
properties of $U(\zeta;q)$. Specifically, using a certain $_2\psi_2$ identity
(see equation (3.2.2) and  entry 3.4.7 in \cite{AndrewsBerndt2}), 
we have
\begin{align}\label{Eq:UnimodalToPartitionRank}
\left(1+\zeta\right)\left(1+\zeta^{-1}\right)U(\zeta;q)
&= 
	-R(-\zeta;q) 
	+ 
	\frac{1+\zeta^{-1}}{(q;q)_\infty}
	\sum_{n\in\mathbb{Z}}\frac{\zeta^nq^{\frac{n(n+1)}{2}}}{1+\zeta^{-1}q^n}
.
\end{align}
Due to work of Zwegers \cite{Zwegers1,Zwegers2}, the mock modular properties of the 
functions on the right hand-side of \eqref{Eq:UnimodalToPartitionRank}
are well understood.

To introduce new restricted unimodal sequences, we take the relation
in \eqref{Eq:RankSummands} as the guiding principle. We recall three additional
well-known rank functions are defined by
\begin{gather*}
\overline{R}(\zeta;q)
:=
	\sum_{n\ge0}\frac{(-1;q)_nq^{\frac{n(n+1)}{2}}}{\left(\zeta q,\zeta^{-1}q;q\right)_n}
,\qquad
\overline{R2}(\zeta;q)
:=
	\sum_{n\ge0}\frac{(-1;q)_{2n}q^{n}}{\left(\zeta q^2,\zeta^{-1}q^2;q^2\right)_n}	
,\\
R2(\zeta;q)
:=
	\sum_{n\ge0}\frac{\left(-q;q^2\right)_{n}q^{n^2}}{\left(\zeta q^2,\zeta^{-1}q^2;q^2\right)_n}	
.
\end{gather*}
Respectively, these are the generating functions of the Dyson rank of overpartitions \cite{Lovejoy3}, the 
$M_2$-rank of overpartitions \cite{Lovejoy4}, and the $M_2$-rank of partitions without
repeated odd parts \cite{BerkovichGarvan1,LovejoyOsburn1}. 
For completeness, an {\it overpartition of size $n$} is a partition of size $n$
where the last appearance of each part may (or may not) be overlined.
We replace $n$ with $-n$ in the summands of the generating functions above
and are led to the following three definitions:
\begin{align}
\label{Eq:FOneDef}
\FOne(\zeta;q)
&=
	\sum_{\substack{n\ge0\\m\in\mathbb{Z}}} \fone(m,n)\zeta^mq^n
	:=
	\sum_{n\geq 1} \frac{\left(-\zeta q,-\zeta^{-1}q;q\right)_{n-1}q^n}{(-q;q)_n}
,\\
\notag
\FTwo(\zeta;q)
&=
	\sum_{\substack{n\ge0\\m\in\mathbb{Z}}} \ftwo(m,n)\zeta^m(-1)^nq^n
	:=
	\sum_{n\geq 1} \frac{\left(-\zeta q^2,-\zeta^{-1}q^2;q^2\right)_{n-1}q^{2n}}{(-q;q)_{2n}}
,\\
\notag
\FThree(\zeta;q)
&=
	\sum_{\substack{n\ge0\\m\in\mathbb{Z}}} \fthree(m,n)\zeta^m(-1)^nq^n
	:=
	\sum_{n\geq 1} \frac{\left(-\zeta q^2,-\zeta^{-1}q^2;q^2\right)_{n-1}q^{2n}}{\left(-q;q^2\right)_n}
.
\end{align}
The need for the factor $(-1)^n$ in the definitions of $\FTwo$ and $\FThree$
becomes apparent below when we give the combinatorial interpretations,
which are given at the beginning of Sections 3, 4, and 5.
We also consider the $\zeta=1$ cases of these functions and set
\begin{gather*}
\FOne(q) := \FOne(1;q) =: \sum_{n\ge0} \fone(n)q^n
,\qquad
\FTwo(q) := \FTwo(1;-q)=: \sum_{n\ge0} \ftwo(n)q^n
,\\
\FThree(q) := \FThree(1;-q)=: \sum_{n\ge0} \fthree(n)q^n
.
\end{gather*}

Unimodal sequence type ranks of a similar shape were introduced
by Kim, Lim, and Lovejoy \cite{KimLimLovejoy1}. Their functions are
given by
\begin{gather*}
V(\zeta;q) 
:= 
	\sum_{n\geq 0} \frac{\left(-\zeta q,-\zeta^{-1}q;q\right)_nq^n}{\left(q;q^2\right)_{n+1}}
,\qquad 
W(\zeta;q) 
:= 
	\sum_{n\geq 0} \frac{\left(\zeta q,\zeta^{-1}q;q^2\right)_nq^{2n}}{(-q;q)_{2n+1}}
,\\
Z(\zeta;q) 
:= 
	\sum_{n\geq 0} \frac{\left(-\zeta q,-\zeta^{-1}q;q\right)_nq^{n}}{(q;q)_{2n+1}}
.
\end{gather*}
To recall one of the combinatorial interpretations, let
\begin{align*}
V(\zeta;q) &=: \sum_{\substack{n\ge0\\  m\in\mathbb{Z}}}v(m,n)\zeta^mq^n.
\end{align*}
Then $v(m,n)$ is the number of odd-balanced
unimodal sequences of $2n+2$ with rank $m$. A unimodal sequence
being \textit{odd-balanced} means that the peak is even, the subsequence
of even parts is strongly unimodal, and each odd part appears to
the left of the peak exactly as many times as it appears to the right
of the peak. We note that since the odd parts appear identically on the left and
right of the peak, the rank of an odd-balanced unimodal sequence
is equal to the rank of the subsequence of even parts. 
Kim, Lim, and Lovejoy investigated these functions in terms of their mock modular
and quantum modular behavior, as well as giving some parity results. The
functions $V(\zeta;q)$ and $W(\zeta;q)$
were further studied by Barnett, Folsom, Ukogu, Wesley, and Xu \cite{BarnettFolsomUkoguWesleyXu1}
for their mock and quantum modular properties.
Our first result gives the mock modular properties of
$\FOne(\zeta;q)$, $\FTwo(\zeta;q)$, and $\FThree(\zeta;q)$.

\begin{theorem}\label{TheoremMock}
The functions $\FOne(\zeta;q)$, $\FTwo(\zeta;q)$, and $\FThree(\zeta;q)$,
if $\zeta$ is specialized to a root of unity times a fractional power of $q$, 
are essentially mixed mock modular forms.
\end{theorem}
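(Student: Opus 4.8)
The plan is to mimic exactly the derivation of \eqref{Eq:UnimodalToPartitionRank}, since each of $\FOne$, $\FTwo$, $\FThree$ was constructed by the same $n\mapsto -n$ substitution that sends the partition rank summand to the strongly unimodal rank summand. Concretely, for $\FOne$ I would start from the defining $q$-series $\sum_{n\ge 1}\frac{(-\zeta q,-\zeta^{-1}q;q)_{n-1}q^n}{(-q;q)_n}$, rewrite the summand using $(-q;q)_n=(-q;q)_\infty/(-q^{n+1};q)_\infty$ and the identity \eqref{Eq:RankSummands} (in the overpartition shape, so that the summand becomes, up to elementary factors, the $-n$-th summand of $\overline{R}(-\zeta;q)$), and then extend the sum over all $n\in\Z$ by appealing to a bilateral hypergeometric summation. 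For $\overline{R}$ the relevant transformation is an instance of a $_2\psi_2$ (or, after simplification, Appell--Lerch) identity of the same type cited from \cite{AndrewsBerndt2} in the text; the upshot will be an identity of the form
\begin{align*}
c_1(\zeta)\,\FOne(\zeta;q)
=
-\,\overline{R}(-\zeta;q)
+
(\text{explicit Appell--Lerch/theta quotient})
,
\end{align*}
with $c_1(\zeta)$ an explicit product of factors like $(1+\zeta^{\pm 1})$. The same scheme applies verbatim to $\FTwo$ with $\overline{R2}(\zeta;q)$ (replacing $q$ by $q^2$ and carrying the $(-q;q)_{2n}$ denominator), and to $\FThree$ with $R2(\zeta;q)$ (here the relevant base is $q^2$ and the extra numerator factor is $(-q;q^2)_n$).

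Once such identities are in hand, the theorem follows from known results: when $\zeta$ is a root of unity times a fractional power of $q$, the functions $\overline{R}(-\zeta;q)$, $\overline{R2}(\zeta;q)$, and $R2(\zeta;q)$ are mock modular forms by the work of Lovejoy and collaborators \cite{Lovejoy3,Lovejoy4,BerkovichGarvan1,LovejoyOsburn1} together with Zwegers' theory \cite{Zwegers1,Zwegers2}; the Appell--Lerch sums that appear are likewise mock modular (again Zwegers); and the theta quotients are weakly holomorphic modular forms. A product of a modular form with a mock modular form is a mixed mock modular form, and dividing by the nonzero constant $c_1(\zeta)$ (for generic such specializations) preserves this. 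Hence each of $\FOne$, $\FTwo$, $\FThree$ is, up to an elementary modular factor, a mixed mock modular form, which is the assertion.

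I expect the main obstacle to be purely bookkeeping rather than conceptual: identifying the precise bilateral summation formula that applies in each of the three cases and tracking the elementary prefactors, powers of $q$, and shifts $n\mapsto n-1$ versus $n\mapsto -n$ correctly, so that the right-hand side comes out as a genuine (mixed) mock modular form rather than something with spurious poles in $\zeta$. In particular one must check that the pole of the bilateral sum at the relevant value of $\zeta$ is cancelled by the prefactor, exactly as the factor $(1+\zeta)(1+\zeta^{-1})$ does in \eqref{Eq:UnimodalToPartitionRank}. A secondary point is to be careful about the $(-1)^n$ normalizations built into the definitions of $\FTwo$ and $\FThree$ and about the $q\mapsto -q$ substitutions in the $\zeta=1$ specializations, but these only affect the modular data (multiplier systems and the precise congruence subgroup), not the mock modularity itself. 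Since the statement is deliberately qualitative ("essentially mixed mock modular forms"), it suffices to exhibit the three decompositions and invoke the cited structural results.
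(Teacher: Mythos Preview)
Your high-level strategy---express each $\FOne,\FTwo,\FThree$ via $q$-series transformations in terms of known rank generating functions and/or Appell--Lerch sums, then invoke Zwegers---is exactly the paper's approach. However, several of your specific predictions about both the tools and the shape of the resulting identities do not match what actually happens.

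For $\FOne$ and $\FThree$ the paper does not use a bilateral $_2\psi_2$ summation. Instead it applies entry 6.3.12 of Andrews--Berndt (Lemma~\ref{L:Entry6.3.12}). The resulting identity for $\FOne$ is
\[
(1-\zeta)(1-\zeta^{-1})\FOne(\zeta;q)=\overline{R}(\zeta;q)-\frac{(-\zeta q,-\zeta^{-1}q;q)_\infty}{(-q;q)_\infty}R(\zeta;q),
\]
so two rank functions appear (and note $\overline{R}(\zeta;q)$, not $-\overline{R}(-\zeta;q)$), rather than one rank plus a bare Appell--Lerch piece. Similarly for $\FThree$ one finds both $R2(-\zeta;-q)$ and $R(-\zeta q;q^2)$ in the decomposition, with further input from entries 12.2.5, 12.3.2 of \cite{AndrewsBerndt1} and a lemma of Garvan.

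The case of $\FTwo$ is where your ``same scheme applies verbatim'' claim is furthest from the paper. The paper does not pass through $\overline{R2}$ at all. Instead it first applies Lemma~\ref{L:AndrewsBerndtTheorem621} to split $\FTwo(\zeta;-q)$ into two auxiliary sums, and then applies Heine's transformation followed by Watson's transformation (the latter twice) to reduce each piece directly to generalized Lambert series; only then does one recognize the $A_2$ and $\mu$ pieces. This chain of three different transformations is considerably more than ``bookkeeping,'' and it is not clear that a single bilateral summation of $_2\psi_2$ type would handle the extra $(-q;q)_{2n}$ denominator cleanly.

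In short: your plan is sound in outline and would likely succeed after adjusting the toolkit, but the paper's route uses entry 6.3.12 rather than $_2\psi_2$ for $\FOne$ and $\FThree$, and for $\FTwo$ requires a substantially more elaborate sequence of transformations than you anticipate.
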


Specifically, Theorem \ref{TheoremMock} follows from Corollaries 
\ref{CorollaryFOneMockModular}, \ref{cor:F2}, and \ref{CorollaryF3MockModular}.

The next theorem gives the asymptotic behavior of $\ftwo(n)$ and $u2(n)$ as $n \to \infty$.

\begin{theorem}\label{TheoremAsymptotics} We have, as $n\rightarrow\infty$, 
\begin{align*}
\ftwo(n) &\sim \frac{1}{8(2n)^{\frac34}} e^{\pi\sqrt{\frac{n}{2}}}
,\qquad\qquad\qquad
\fthree(n) \sim \frac{1}{4\sqrt{3}(6n)^{\frac34}}e^{\pi\sqrt{\frac{2n}{3}}}
.
\end{align*}
\end{theorem}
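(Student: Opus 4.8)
The plan is to apply the circle method, or more precisely an Ingham-Tauberian / Wright-type argument, to the generating functions $\FTwo(q)$ and $\FThree(q)$. The leading exponential growth $e^{\pi\sqrt{n/2}}$ and $e^{\pi\sqrt{2n/3}}$ together with the polynomial factors $n^{-3/4}$ signal that each generating function behaves, near the dominant cusp $q\to 1$, like a constant times a modular-type factor with a specific exponential singularity, with no extra algebraic singularity at $q=1$ beyond what a single $1/(q;q)_\infty$-type term would give. So the first step is to obtain, from Theorem \ref{TheoremMock} and its supporting Corollaries \ref{cor:F2} and \ref{CorollaryF3MockModular}, an explicit decomposition of $\FTwo(q)$ and $\FThree(q)$ into a piece that is (up to an Eulerian prefactor) a mock modular form plus its completion, and to read off the behavior of each piece as $q\to 1^-$ along the real axis. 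Because the non-holomorphic Eichler-integral completions and the ``extra'' pieces on the right-hand sides of the relevant identities contribute only lower-order terms, the dominant contribution will come from a single term of the shape $\frac{1}{(\pm q;\pm q^2)_\infty}$ or $\frac{1}{(q^2;q^2)_\infty}$-type, whose asymptotics near $q=1$ are governed by the modular transformation of $\eta$.

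Concretely, I would write $q=e^{-t}$ with $t\to 0^+$, use the modular transformation formula for $\eta(\tau)$ (equivalently for $(q;q)_\infty$, $(q;q^2)_\infty$, $(-q;q)_\infty$, $(-q;q^2)_\infty$) to determine that $\FTwo(e^{-t})\sim C_2\, t^{1/2} e^{\pi^2/(8t)}$ and $\FThree(e^{-t})\sim C_3\, t^{1/2} e^{\pi^2/(6t)}$ for explicit constants $C_2,C_3$, having first checked that contributions from all other cusps (in particular $q\to -1$, $q\to\pm i$) are exponentially smaller, and that the mock/non-holomorphic pieces are likewise subdominant. The precise constants and the exponents $\pi^2/(8t)$, $\pi^2/(6t)$ are exactly what is needed to produce $e^{\pi\sqrt{n/2}}$ and $e^{\pi\sqrt{2n/3}}$ after the Tauberian step: one has $\min_{t>0}\big(nt+\tfrac{\pi^2}{8t}\big)=\pi\sqrt{n/2}$ and $\min_{t>0}\big(nt+\tfrac{\pi^2}{6t}\big)=\pi\sqrt{2n/3}$. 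The factor $t^{1/2}$ in the singular expansion, combined with the $n^{1/2}$ from the saddle-point width, is what yields the $n^{-3/4}$ (rather than $n^{-1/2}$ or $n^{-1}$) in the final answer; tracking this power carefully is essential.

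For the final extraction of coefficient asymptotics I would invoke a standard Tauberian theorem of Ingham type (as used, e.g., in Rhoades's work cited above for $u(n)$), which converts a singular expansion $F(e^{-t})\sim C t^{\alpha} e^{A/t}$ as $t\to0^+$, valid in a suitable region, into $[q^n]F(q)\sim C'\, n^{\beta} e^{2\sqrt{An}}$ with $\beta$ and $C'$ determined by $\alpha$, $A$, $C$; alternatively one can run the Hardy–Ramanujan–Rademacher circle method directly and bound the non-principal arcs. The main obstacle I anticipate is not the saddle-point bookkeeping but the \emph{uniform error control}: one must show that along the Farey dissection every arc other than the one at $q=1$ contributes $o$ of the main term, which requires uniform estimates on $\FTwo$ and $\FThree$ near every root of unity; here the mock-modular nature (the Mordell-type integrals / Appell sums appearing in the completions from Section 3 and 4) makes the transformation formulas more delicate than for a pure modular form, and one must verify that the Eichler-integral contributions and the partial-theta or $_2\psi_2$-type correction terms in the identities underlying Corollaries \ref{cor:F2} and \ref{CorollaryF3MockModular} are genuinely of lower order. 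Once that uniformity is in hand, matching the constants $C_2 = \tfrac{1}{8}\cdot 2^{-3/4}\cdot(\text{power of }\pi)$ and $C_3=\tfrac{1}{4\sqrt 3}\cdot 6^{-3/4}\cdot(\text{power of }\pi)$ against the stated asymptotics is a direct computation.
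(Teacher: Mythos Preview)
Your overall plan---establish the singular expansion of the generating function as $q\to 1$ and then apply an Ingham-type Tauberian theorem---is the same route the paper takes. However, there are two concrete problems with the proposal as written.

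First, the claimed power of $t$ is wrong. The paper shows $\FTwo(1;-e^{-w})\sim \tfrac{1}{4}e^{\pi^2/(8w)}$ and $\FThree(1;-e^{-w})\sim \tfrac{1}{6\sqrt{2}}e^{\pi^2/(6w)}$, i.e.\ $\alpha=0$ in the notation of Theorem~\ref{Taub}, not $\alpha=\tfrac12$. Your heuristic that ``$t^{1/2}$ combined with the saddle-point width gives $n^{-3/4}$'' is off: in the Tauberian formula
\[
a(n)\sim \frac{\lambda}{2\sqrt{\pi}}\,\frac{A^{\alpha/2+1/4}}{n^{\alpha/2+3/4}}\,e^{2\sqrt{An}},
\]
it is $\alpha=0$ that produces $n^{-3/4}$, whereas $\alpha=\tfrac12$ would yield $n^{-1}$. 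If you actually carried out the modular transformations you propose, you would discover this, but as stated the bookkeeping is incorrect.

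Second, you omit the monotonicity hypothesis required by Theorem~\ref{Taub}. For $\fthree(n)$ this is immediate (append a single $1$ to the left of the peak), but for $\ftwo(n)$ it is not obvious, and the paper devotes a separate lemma to proving $\ftwo(n+1)\ge\ftwo(n)$ via a positivity argument for $(1-q)\FTwo(1;-q)$. Without this step the Tauberian theorem does not apply.

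A smaller contrast in execution: the paper does \emph{not} work from the mock-modular representations in Corollaries~\ref{cor:F2} and~\ref{CorollaryF3MockModular} (it remarks that route is ``considerably longer''), but instead uses the intermediate $q$-hypergeometric identities---equation~\eqref{E:F2Sums1} and Proposition~\ref{prop:R2}---and passes to the limit $q\to 1$ directly inside the sums. This sidesteps the uniform Farey-arc estimates you anticipate as the main obstacle: for the Tauberian theorem only the behaviour as $w\to 0$ in a fixed sector is needed, not control near every root of unity.
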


Additionally, we fully determine the parity of $\fthree(n)$.
\begin{theorem}\label{thm:f3odd}
We have that $\fthree(n)$ is odd if and only if $8n-1=3^b\ell^2p^c$, where $p$ 
is a prime congruent to $5$ or $23$ modulo $24$, $p\nmid \ell$, 
$b\in\N_0$, $\ell\in\N$, and $c\equiv1\pmod{4}$.
\end{theorem}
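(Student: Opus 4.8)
The plan is to reduce the parity question for $\fthree(n)$ to a question about the coefficients of a classical modular form, and then to invoke the theory of eta-quotients and Hecke eigenforms. First I would take the generating function $\FThree(q)=\sum_n \fthree(n)q^n = \FThree(1;-q)$ and simplify it modulo $2$. Setting $\zeta=1$ in \eqref{Eq:FOneDef}, we get
\begin{align*}
\FThree(1;q)
&=
\sum_{n\geq 1} \frac{\left(-q^2;q^2\right)_{n-1}^2 q^{2n}}{\left(-q;q^2\right)_n}
.
\end{align*}
The key arithmetic observation is that modulo $2$ we have $\left(-q;q^2\right)_n \equiv \left(q;q^2\right)_n$ and $\left(-q^2;q^2\right)_{n-1}\equiv \left(q^2;q^2\right)_{n-1}$, so the series becomes a Lambert-type or Appell-type sum that (after the substitution $q\mapsto -q$ and clearing denominators with the relevant product) telescopes or collapses to a theta function. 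I expect this to produce a clean identity of the shape $\FThree(q) \equiv \sum_{n\ge 0} q^{n(3n+?)/?}\pmod 2$ or, more likely given the ``$8n-1$'' appearing in the statement, an identity of the form $\sum_n \fthree(n) q^{8n-1} \equiv \Psi(q) \pmod 2$ where $\Psi$ is (a dissection of) a weight-$\tfrac32$ theta series such as $\sum_{n} q^{3n^2}$ or a product like $\frac{(q^{24};q^{24})_\infty^3}{\ldots}$ tied to $\eta(24\tau)^3$ or a ternary theta function.

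Granting such an identity, the next step is to recognize the right-hand side modulo $2$ as (a twist/dissection of) a \textbf{Hecke eigenform}. The exponents $3^b\ell^2 p^c$ with $p\equiv 5,23\pmod{24}$ and the role of the prime $24$ strongly suggest that the controlling object is $\eta(8\tau)^3$ or $\eta(24\tau)$-type; recall $\eta(8\tau)^3 = \sum_{n\ge 0}(-1)^n(2n+1)q^{(2n+1)^2}$ by Jacobi, whose coefficient of $q^m$ is odd exactly when $m$ is an odd square. Matching this against the claimed set $\{8n-1\}$ one sees that the relevant eigenform should instead be a CM form of weight $\tfrac32$ or the weight-$1$ theta series attached to $\Q(\sqrt{-6})$, so that a coefficient indexed by $m=8n-1$ is odd precisely when the ``$-6$ part'' of $m$ is a square, i.e. when $m = 3^b \ell^2 p^c$ with $p$ inert in $\Q(\sqrt{-6})$ — and the primes congruent to $5,23\pmod{24}$ are exactly (half of) the primes for which $\left(\tfrac{-6}{p}\right)=-1$ together with the ramified prime behavior at $2,3$. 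Concretely: write the parity-reduced form as $\eta(\tau)\eta(2\tau)$ or $\eta(8\tau)\eta(16\tau)$ up to character, identify its coefficients with representation numbers by a quadratic form of discriminant related to $24$, and read off when these are odd using the multiplicative structure of the eigenform (coefficients at good primes $p$ are $1+\left(\tfrac{\cdot}{p}\right)$, hence odd only when the Legendre symbol is $-1$, forcing even exponents at split primes).

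The main obstacle, I expect, will be the first step: establishing the exact congruence $\FThree(q)\pmod 2$ as a theta series. The sum defining $\FThree(1;q)$ has a denominator $\left(-q;q^2\right)_n$ that does not vanish mod $2$, so one cannot simply drop it; instead one must find the right Bailey pair, $q$-series transformation, or the mock-modular identity from the proof of Theorem~\ref{TheoremMock} (Corollary~\ref{CorollaryF3MockModular}) and then observe that the non-holomorphic completion and the ``mock'' part die modulo $2$, leaving only a weakly holomorphic modular form whose reduction is a theta function. I would look first at the formula expressing $\FThree(q)$ in terms of $R2(\zeta;q)$ and an Appell sum (the analogue of \eqref{Eq:UnimodalToPartitionRank}), specialize carefully at $\zeta=1$ where $(1+\zeta)(1+\zeta^{-1})=4\equiv 0$, extract the finite part, and reduce. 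Once the theta identity is in hand, the remaining number theory — translating ``coefficient is odd'' into the stated multiplicative condition on $8n-1$ — is routine given the classification of odd primes by their splitting in $\Q(\sqrt{-6})$ and the elementary theory of sums of two squares / binary quadratic forms of that discriminant, together with a short check of the exponents $b$ and the congruence $c\equiv 1\pmod 4$ coming from the $p$-adic valuation parity in a weight-$\tfrac32$ (rather than weight-$1$) eigenform.
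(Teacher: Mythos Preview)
Your proposal is a plan rather than a proof, and the plan itself points toward the wrong arithmetic object. You expect the mod~$2$ reduction of $\FThree(1;-q)$ to collapse to a (twist of a) classical definite theta series attached to $\Q(\sqrt{-6})$, and then to read off the parity from splitting behavior there. But primes $p\equiv 5\pmod{24}$ satisfy $\bigl(\tfrac{-6}{p}\bigr)=+1$, so they are \emph{split}, not inert, in $\Q(\sqrt{-6})$; your proposed criterion ``inert in $\Q(\sqrt{-6})$'' therefore cannot produce exactly the residue classes $5,23\pmod{24}$, and no weight-$\tfrac12$ or weight-$\tfrac32$ definite theta series will yield the stated condition with $c\equiv 1\pmod 4$. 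The hope that the mock part of Corollary~\ref{CorollaryF3MockModular} ``dies modulo $2$'' leaving a holomorphic theta function is also not what happens: the coefficients of the Appell sums are not integers in any obvious way, so reducing them mod~$2$ has no direct meaning.

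What the paper actually does is different in kind. Via a specific Bailey pair (Lemma~\ref{LemmaLovejoy} with $a=q^4$, $b=q$, $d=c^2$, $c\to 0$) one obtains an \emph{exact} identity for $\FThree(1;-q)$ as a signed double sum, whose reduction mod~$2$ is $\sum_{n\ge0}\sum_{0\le j\le n}(1+q^{2j+1})q^{3n^2+6n-2j^2-3j+2}$. After a change of variables this is recognized as one half of an \emph{indefinite} theta series $\sum q^{(N^2-6J^2)/16+1/8}$ over a fundamental domain for the unit group of $\Q(\sqrt{6})$ (the real quadratic field), using the Andrews--Dyson--Hickerson lemma. The parity of $\fthree(n)$ then becomes the parity of $\tfrac12 a(16n-2)$, where $a(m)$ counts ideals of $\mathcal{O}_{\Q(\sqrt{6})}$ of norm $m$ having a totally positive generator; the distinction between the classes $\{1,19\}$ and $\{5,23\}$ mod~$24$ (both split in $\Q(\sqrt{6})$) comes from the narrow class group, and the factor $\tfrac12$ together with the sign condition on $\sum g_t$ is what forces $c\equiv 1\pmod 4$ rather than merely $c$ odd. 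None of this is visible from the imaginary-quadratic/Hecke-eigenform heuristic you sketched.
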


We note that the generating function $\FThree(\zeta;q)$ was simultaneously and independently
introduced by Barnett, Folsom, and Wesley \cite{BarnettFolsomWesley1}. There the
relevant function is $N(z;\tau)$, which the authors study for its mock and quantum Jacobi
properties. 
Furthermore, Theorem \ref{thm:f3odd} given above 
was independently discovered and given as Conjecture 1.4 in
\cite{BarnettFolsomWesley1}, and Jeremy Lovejoy has given another proof
in private communications.

The article is organized as follows. In Section 2, we give the various
definitions, identities, and general results required in our proofs. In Section 3,
we discuss the function $\FOne(\zeta;q)$, beginning with its combinatorial interpretation.
As it turns out, this function is the one
for which we can say the least, which is surprising as it comes from the simplest
of the three ranks. The relevant statement of Theorem \ref{TheoremMock} is
contained in Corollary \ref{CorollaryFOneMockModular}.
Section 4 is devoted to the investigation of the function $\FTwo(\zeta;q)$. 
This includes the combinatorial interpretation, identities in terms mock modular objects
for Theorem \ref{TheoremMock} in Corollary \ref{cor:F2}, and 
the asymptotic behavior given in Theorem \ref{TheoremAsymptotics}
is proved toward the end of the section. We also give a brief note on the 
formal dual $\FTwo(\zeta;q^{-1})$
and its quantum modularity. We study $\FThree(\zeta;q)$ in Section 5; again this
includes the combinatorial interpretation, mock modular properties in
Corollary \ref{CorollaryF3MockModular} for Theorem \ref{TheoremMock}, and 
the asymptotics of Theorem \ref{TheoremAsymptotics} are proved after
Corollary \ref{CorollaryF3MockModular}.
In this section we end with a proof of the parity classification 
in Theorem \ref{thm:f3odd} for $\fthree(n)$, which is
related to the arithmetic of $\mathbb{Q}(\sqrt{6})$. We conclude the article with
a few remarks in Section 6.

\section*{Acknowledgments}
We thank Amanda Folsom and Jeremy Lovejoy for bringing \cite{BarnettFolsomWesley1}
to our attention. We also thank the anonymous referees for their helpful comments
and pointing out various typos in an earlier version of this manuscript.

\section{Preliminaries}
\subsection{Combinatorial results}
We require several known identities and transformation for $q$-series. We state
these results in a series of lemmas. 
In the statements of these identities we give
restrictions for convergence, however we make no mention of this
in our proofs as the convergence conditions are clear and the resulting 
identities hold in greater generality due to analytic continuation.

The first lemma is Heine's transformation.
\begin{lemma}\cite[equation (III.1)]{GasperRahman1}\label{L:Heine}
Suppose that $|t|,|b|,|q|<1$. Then we have
\begin{align*}
\sum_{n\ge0}\frac{(a,b;q)_n}{(c,q;q)_n}t^n
&=
\frac{(b,at;q)_\infty}{(c,t;q)_\infty}
\sum_{n\ge0}\frac{\left(\frac{c}{b},t;q\right)_n}{(at,q;q)_n}b^n.
\end{align*}
\end{lemma}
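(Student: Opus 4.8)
The plan is to obtain Heine's transformation from two applications of the $q$-binomial theorem
\[
\sum_{n\ge0}\frac{(a;q)_n}{(q;q)_n}z^n = \frac{(az;q)_\infty}{(z;q)_\infty}
\qquad(|z|<1),
\]
a standard classical identity, combined with a single interchange in the order of summation. First I would rewrite the quotient of Pochhammer symbols appearing in the summand of the left-hand side. Factoring each finite symbol as a ratio of infinite products via $(x;q)_n = (x;q)_\infty/(xq^n;q)_\infty$ gives
\[
\frac{(b;q)_n}{(c;q)_n} = \frac{(b;q)_\infty}{(c;q)_\infty}\cdot\frac{(cq^n;q)_\infty}{(bq^n;q)_\infty}.
\]
Applying the $q$-binomial theorem with $z=bq^n$ and parameter $c/b$ expands the remaining quotient as a power series in $q^n$, namely $\frac{(cq^n;q)_\infty}{(bq^n;q)_\infty} = \sum_{m\ge0}\frac{(c/b;q)_m}{(q;q)_m}b^mq^{nm}$, which converges for every $n$ since $|bq^n|\le|b|<1$.

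Substituting this expansion back into the left-hand side produces a double sum over $n$ and $m$. Under the hypotheses $|t|,|b|,|q|<1$ the terms are dominated by an absolutely convergent product of geometric-type series, so I may interchange the order of summation and carry out the sum over $n$ first. The inner sum then has the shape $\sum_{n\ge0}\frac{(a;q)_n}{(q;q)_n}(tq^m)^n$, and a second application of the $q$-binomial theorem (valid because $|tq^m|\le|t|<1$) evaluates it as $(atq^m;q)_\infty/(tq^m;q)_\infty$.

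It then remains to simplify the outer $m$-sum. Writing $\frac{(atq^m;q)_\infty}{(tq^m;q)_\infty} = \frac{(at;q)_\infty}{(t;q)_\infty}\cdot\frac{(t;q)_m}{(at;q)_m}$ lets me pull the infinite product $(b,at;q)_\infty/(c,t;q)_\infty$ outside the summation, and the surviving sum is exactly $\sum_{m\ge0}\frac{(c/b,t;q)_m}{(at,q;q)_m}b^m$, which is the right-hand side. The only genuinely delicate step is justifying the interchange of summation; everything else is routine manipulation with the splitting identity $(x;q)_\infty = (x;q)_m(xq^m;q)_\infty$. The required absolute convergence is precisely what the restrictions $|t|,|b|,|q|<1$ guarantee — $|b|,|q|<1$ control the $m$-summation uniformly in $n$, and $|t|<1$ controls the $n$-summation — which is why those conditions appear in the statement.
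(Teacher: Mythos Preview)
Your argument is correct and is exactly the classical proof of Heine's transformation via two applications of the $q$-binomial theorem and one interchange of summation. The paper does not supply its own proof of this lemma --- it is quoted directly from Gasper--Rahman \cite[equation (III.1)]{GasperRahman1} --- and the derivation you give is essentially the one found in that reference, so there is nothing to contrast.
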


The following is known as Watson's transformation.
\begin{lemma}\cite[equation (III.18)]{GasperRahman1}\label{L:Watson}
Suppose that $|aq|<|de|$. Then we have
\begin{align*}
\sum_{n\ge0}\frac{\left(\frac{aq}{bc},d,e;q\right)_n\left(\frac{aq}{de}\right)^n}
	{\left(q,\frac{aq}{b},\frac{aq}{c};q\right)_n}
&=
\frac{\left(\frac{aq}{d},\frac{aq}{e};q\right)_\infty}{\left(aq,\frac{aq}{de};q\right)_\infty}
\sum_{n\ge0}
\frac{\left(a,\sqrt{a}q,-\sqrt{a}q,b,c,d,e;q\right)_n  (aq)^{2n} (-1)^nq^{\frac{n(n-1)}{2}}}
{\left(q,\sqrt{a},-\sqrt{a},\frac{aq}{b},\frac{aq}{c},\frac{aq}{d},\frac{aq}{e};q\right)_n (bcde)^n}
.
\end{align*}
\end{lemma}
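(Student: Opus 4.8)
\emph{Proposal.} I would prove this as the $f\to\infty$ limit of the classical Watson transformation of a very-well-poised ${}_8\phi_7$ into a balanced ${}_4\phi_3$, with the latter built up from Heine's transformation (Lemma~\ref{L:Heine}).

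First I would rewrite the right-hand side: using $\tfrac{(q\sqrt a,-q\sqrt a;q)_n}{(\sqrt a,-\sqrt a;q)_n}=\tfrac{1-aq^{2n}}{1-a}$ and $(aq)^{2n}(bcde)^{-n}=(a^2q^2/bcde)^n$, the series on the right of the claimed identity equals
\[
\sum_{n\ge0}\frac{1-aq^{2n}}{1-a}\,\frac{\left(a,b,c,d,e;q\right)_n}{\left(q,\tfrac{aq}{b},\tfrac{aq}{c},\tfrac{aq}{d},\tfrac{aq}{e};q\right)_n}(-1)^nq^{\binom{n}{2}}\left(\frac{a^2q^2}{bcde}\right)^{\!n}.
\]
This is exactly the termwise $f\to\infty$ limit of ${}_8W_7\!\left(a;b,c,d,e,f;q,\tfrac{a^2q^2}{bcdef}\right)$, since $\tfrac{(f;q)_n}{(aq/f;q)_n}\left(\tfrac{a^2q^2}{bcdef}\right)^{\!n}\to(-1)^nq^{\binom{n}{2}}\left(\tfrac{a^2q^2}{bcde}\right)^{\!n}$. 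Now in the classical Watson transformation
\[
{}_8W_7\!\left(a;b,c,d,e,f;q,\tfrac{a^2q^2}{bcdef}\right)=\frac{\left(aq,\tfrac{aq}{de},\tfrac{aq}{df},\tfrac{aq}{ef};q\right)_\infty}{\left(\tfrac{aq}{d},\tfrac{aq}{e},\tfrac{aq}{f},\tfrac{aq}{def};q\right)_\infty}\;{}_4\phi_3\!\left[\begin{matrix}\tfrac{aq}{bc},\ d,\ e,\ f\\ \tfrac{aq}{b},\ \tfrac{aq}{c},\ \tfrac{def}{a}\end{matrix};q,q\right],
\]
letting $f\to\infty$ sends the infinite-product prefactor to $\tfrac{(aq,aq/de;q)_\infty}{(aq/d,aq/e;q)_\infty}$ (as $(x;q)_\infty\to1$ when $x\to0$) and the balanced ${}_4\phi_3$ to ${}_3\phi_2\!\left[\tfrac{aq}{bc},d,e;\tfrac{aq}{b},\tfrac{aq}{c};q,\tfrac{aq}{de}\right]$, which is the left-hand side of the claim. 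Dividing through by the limiting prefactor yields precisely Lemma~\ref{L:Watson}. So the task reduces to (i) proving classical Watson and (ii) justifying the limit.

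For (i) I would use only Heine's transformation: iterating Lemma~\ref{L:Heine} gives the $q$-Gauss summation and the companion ${}_2\phi_1$ transformations; from these one obtains Sears' balanced ${}_3\phi_2$ transformation (equivalently, one step of the Bailey chain applied to the unit Bailey pair), and classical Watson then follows by expanding the ${}_8\phi_7$ as an iterated double sum and collapsing the inner ${}_3\phi_2$. For (ii): the ${}_4\phi_3$ being balanced, it converges for $|q|<1$, and for all large $f$ the general term of the ${}_8W_7$ is dominated, uniformly in $f$, by the general term of the convergent theta-type series displayed above, so the termwise limit is legitimate by dominated convergence; the prefactor limit is immediate.

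The main obstacle is (i): the chain Heine $\Rightarrow$ $q$-Gauss $\Rightarrow$ ${}_2\phi_1$ transformations $\Rightarrow$ Sears' ${}_3\phi_2$ transformation $\Rightarrow$ Watson is several steps of careful Pochhammer bookkeeping, and the final ${}_8\phi_7$-collapse in particular needs care with the region of validity and with which parameter is specialised so that the inner sum terminates. Everything downstream — recognising the very-well-poised factor, taking $f\to\infty$, matching the prefactors — is routine; indeed if classical Watson may simply be quoted (it is standard, and the paper already cites Gasper--Rahman), the entire proof is just the short limit computation in (ii).
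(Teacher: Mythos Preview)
The paper does not prove this lemma at all: it is stated with a bare citation to Gasper--Rahman \cite[(III.18)]{GasperRahman1} and then used as a black box. So there is no ``paper's own proof'' to compare against, and any correct derivation you supply goes beyond what the paper does.

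Your strategy is the standard one and is sound. Two remarks. First, the ``classical Watson transformation'' you display, with the four-ratio prefactor
\[
\frac{(aq,aq/de,aq/df,aq/ef;q)_\infty}{(aq/d,aq/e,aq/f,aq/def;q)_\infty},
\]
is not a free-standing nonterminating identity: it is exactly the \emph{terminating} Watson formula (Gasper--Rahman (III.17), i.e.\ $f=q^{-N}$) rewritten via $(x;q)_N=(x;q)_\infty/(xq^N;q)_\infty$. For generic nonterminating $f$ the two-term relation fails and one picks up an extra term. This does not harm your argument, since your limit ``$f\to\infty$'' is really $N\to\infty$ along $f=q^{-N}$, and at each $N$ the terminating identity is available; just say so explicitly. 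Second, the dominated-convergence justification in (ii) then becomes routine: for fixed $|q|<1$ and $|aq/de|<1$ the ${}_8W_7$ and ${}_4\phi_3$ summands at $f=q^{-N}$ are bounded termwise, uniformly in $N$, by the absolutely convergent limiting series.

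For (i), your proposed chain Heine $\Rightarrow$ $q$-Gauss $\Rightarrow$ Sears ${}_3\phi_2$ $\Rightarrow$ terminating Watson is one legitimate route; an alternative that stays closer to the paper's toolkit is to obtain terminating Watson directly from Bailey's Lemma (Lemma~\ref{L:BaileysLemma}) applied to the unit Bailey pair, which is shorter and avoids the ${}_8\phi_7$ double-sum collapse you flag as delicate. Either way, since the paper already grants itself the Gasper--Rahman citation, your limit computation in (ii) alone would suffice here.
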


The next lemma is often used with partial theta functions.
\begin{lemma}\cite[Theorem 6.2.1]{AndrewsBerndt2}\label{L:AndrewsBerndtTheorem621}
Suppose that $|b|<1$ and $|Abq|<|a|$. Then we have
\begin{align*}
&\sum_{n\ge0}\frac{(B,-Abq;q)_nq^n}{(-aq,-bq;q)_n}
	\\\nonumber
&=
	-\frac{(B,-Abq;q)_\infty}{a(-aq,-bq;q)_\infty}
	\sum_{n\ge0}\frac{\left(A^{-1};q\right)_n \left(\frac{Abq}{a}\right)^n}{\left(-\frac{B}{a};q\right)_{n+1}}
	+
	(1+b)
	\sum_{n\ge0}\frac{\left(-a^{-1};q\right)_{n+1} \left(-\frac{ABq}{a};q\right)_n (-b)^n}
	{\left(-\frac{B}{a}, \frac{Abq}{a};q\right)_{n+1}}
.
\end{align*}
\end{lemma}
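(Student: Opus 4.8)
The plan is to derive this transformation from Heine's transformation (Lemma~\ref{L:Heine}), motivated by the observation that the infinite product $\tfrac{(B,-Abq;q)_\infty}{(-aq,-bq;q)_\infty}$ multiplying the first series on the right-hand side is precisely of the shape that Heine's transformation produces. Writing the left-hand side as the nonterminating series ${}_3\phi_2\bigl(B,-Abq,q;-aq,-bq;q,q\bigr)$, the immediate difficulty is that its two denominator parameters $-aq$ and $-bq$ do not match the pattern $(c,q;q)_n$ required to apply Heine directly. I would therefore first reduce it to a genuine ${}_2\phi_1$ --- either by splitting off one Pochhammer factor and summing by parts, or by iterating Heine (equivalently invoking the Sears--Thomae machinery for ${}_3\phi_2$), so as to manufacture the product $\tfrac{(B,-Abq;q)_\infty}{(-aq,-bq;q)_\infty}$ together with a residual series whose summand is a rational function of $q^n$ with denominator built from $(-\tfrac{B}{a};q)_{n+1}$ and $(\tfrac{Abq}{a};q)_{n+1}$.

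The second stage is to split that residual summand by partial fractions with respect to the two linear factors in $q^n$ sitting in its denominator. One of the two partial-fraction pieces recombines with the infinite product to yield the first right-hand series, the partial-theta-type sum $\sum_{n\ge0}\tfrac{(A^{-1};q)_n (Abq/a)^n}{(-B/a;q)_{n+1}}$ together with the overall constant $-\tfrac1a$; the complementary piece reindexes into the second sum, with the factor $(1+b)$ and the shift to $(\,\cdot\,)_{n+1}$ emerging from the reindexing. The hypotheses $|b|<1$ and $|Abq|<|a|$ are exactly what is needed here: the factor $(Abq/a)^n$ supplies geometric decay, so the original series and each partial-fraction piece converge absolutely and every interchange of summation is legitimate.

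As a more mechanical but fully rigorous alternative, I would reduce to the terminating case $B=q^{-N}$, in which both sides are finite sums and the left-hand side becomes a terminating very-well-poised series that can be handled by Watson's transformation (Lemma~\ref{L:Watson}); one then extends to general $B$ using that both sides are, after clearing denominators, formal power series in $q$ with coefficients rational in the remaining parameters, so agreement on the infinite family $\{q^{-N}\}_{N\ge0}$ forces the identity. I expect the main obstacle to lie in the first approach's reduction step: coaxing the left-hand ${}_3\phi_2$ into a form to which Heine literally applies, and then executing the partial-fraction split so that the precise index shifts to $(\,\cdot\,)_{n+1}$ and the exact constants $-\tfrac1a$ and $(1+b)$ come out correctly. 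Matching this bookkeeping, rather than any deep structural difficulty, is where the real effort is concentrated.
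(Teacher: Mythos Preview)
The paper does not prove this lemma at all: it is stated in the preliminaries section as a quotation of Theorem~6.2.1 from Andrews--Berndt's \emph{Ramanujan's lost notebook, Part~II}, with no argument given. There is therefore no ``paper's own proof'' against which to compare your proposal.

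As for the proposal itself, what you have written is a strategic outline rather than a proof. Both routes you describe --- reducing the left-hand ${}_3\phi_2$ via Heine/Sears-type transformations and then splitting by partial fractions, or specializing to $B=q^{-N}$ and invoking a Watson-type transformation before extending by rational-function arguments --- are standard and plausible lines of attack for identities of this shape, and indeed Andrews and Berndt's treatment of Entry~6.2.1 proceeds along broadly similar hypergeometric lines. But your text does not actually execute either path: in the first approach you never specify which Sears transformation or which summation-by-parts manoeuvre turns the ${}_3\phi_2$ into something Heine-applicable, nor do you verify that the partial-fraction decomposition produces exactly the two displayed series with the claimed constants; in the second approach you do not check that the terminating case is genuinely covered by Lemma~\ref{L:Watson} (the left side is not obviously very-well-poised in the required form), nor do you justify the analytic continuation step beyond a one-line assertion. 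So while nothing you say is wrong in spirit, the proposal as it stands is a plan with acknowledged gaps, not a proof, and there is no in-paper benchmark to measure it against.
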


Furthermore we use another identity related to Lemma \ref{L:AndrewsBerndtTheorem621}.
\begin{lemma}\cite[entry 6.3.12]{AndrewsBerndt2}\label{L:Entry6.3.12}
The following identity holds,
\begin{align*}
\sum_{n\ge0} \frac{(-aq,-bq;q)_nq^{n+1}}{(-cq;q)_n}
&=
	\sum_{n\ge1} \frac{\left(-c^{-1};q\right)_n \left(\frac{ab}{c}\right)^{n-1} q^{\frac{n(n+1)}{2}}}
		{\left(\frac{aq}{c},\frac{bq}{c};q\right)_n}
	-
	\frac{(-aq,-bq;q)_\infty}{c(-cq;q)_\infty}
	\sum_{n\ge1} \frac{ \left(\frac{ab}{c^2}\right)^{n-1} q^{n^2}}
		{\left(\frac{aq}{c},\frac{bq}{c};q\right)_n}
.
\end{align*}
\end{lemma}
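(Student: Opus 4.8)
The plan is to read the identity as a transformation of an ordinary convergent basic hypergeometric series into a pair of partial theta functions, and to produce it from Watson's transformation (Lemma~\ref{L:Watson}). The left-hand side is $L(a,b,c):=q\sum_{n\ge0}\frac{(-aq,-bq;q)_n}{(-cq;q)_n}q^n$, a plain ${}_2\phi_1$-type series with a single Pochhammer symbol in the denominator, whereas the right-hand side is a linear combination of two partial theta functions, one carrying the Gaussian weight $q^{\frac{n(n+1)}{2}}$ and the other $q^{n^2}$, with the second weighted by the product $\frac{(-aq,-bq;q)_\infty}{c(-cq;q)_\infty}$. Quadratic exponents of this shape are the signature of a very-well-poised series, so the natural source is the ${}_8\phi_7$ on the right-hand side of Lemma~\ref{L:Watson}, whose summand already contains the factor $(-1)^nq^{\frac{n(n-1)}{2}}$.

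Concretely, I would choose the free parameters $a,b,c,d,e$ of Lemma~\ref{L:Watson} together with a confluent limit (sending one parameter to $0$ or $\infty$) so that the ${}_4\phi_3$-type side of Watson's transformation becomes a constant multiple of $L(a,b,c)$, while the prefactor $\frac{(aq/d,aq/e;q)_\infty}{(aq,aq/(de);q)_\infty}$ degenerates into the product $\frac{(-aq,-bq;q)_\infty}{(-cq;q)_\infty}$ seen in the second theta sum. In such a limit a factor of the form $(e;q)_n e^{-n}\to(-1)^nq^{\frac{n(n-1)}{2}}$ supplies exactly the Gaussian weights that convert the surviving series into partial theta functions; matching these against $q^{\frac{n(n+1)}{2}}$ and $q^{n^2}$ fixes the specialization and accounts for the geometric factors $\left(\frac{ab}{c}\right)^{n-1}$ and $\left(\frac{ab}{c^2}\right)^{n-1}$ as well as the shift to $n\ge1$.

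The essential difficulty, and the reason two theta sums appear rather than one, is that the relevant ${}_8\phi_7$ is non-terminating and so does not collapse to a single infinite product. Here I would invoke the classical three-term relation expressing a non-terminating very-well-poised ${}_8\phi_7$ as a principal balanced series plus a second balanced series multiplied by a ratio of infinite products; under the confluent limit these two pieces become precisely the theta sum with weight $q^{\frac{n(n+1)}{2}}$ and the product-weighted sum with weight $q^{n^2}$. I expect the main obstacle to be controlling this limit so that no stray $q$-powers survive, the prefactors land exactly on $\frac{(-aq,-bq;q)_\infty}{c(-cq;q)_\infty}$, and the index shifts line up; this is where the bulk of the bookkeeping lies.

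As a fully elementary alternative that bypasses the confluent-limit analysis, I would verify the identity by a $q$-functional-equation argument. Both sides are analytic in the parameters, and on the left, peeling off the $n=0$ term and re-indexing via $(-aq,-bq;q)_{n}=(1+aq)(1+bq)(-aq^2,-bq^2;q)_{n-1}$ and $(-cq;q)_n=(1+cq)(-cq^2;q)_{n-1}$ yields the first-order relation $L(a,b,c)=q+\frac{q(1+aq)(1+bq)}{1+cq}L(aq,bq,cq)$. A partial-fraction manipulation of the two theta sums, exploiting their common denominator $(aq/c,bq/c;q)_n$, should show that the right-hand side obeys the same relation, and comparing both sides in a limit of the parameters then pins down the identity uniquely. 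This route trades the delicate limit for a routine, if lengthy, verification.
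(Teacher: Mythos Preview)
The paper does not prove this lemma; it is quoted verbatim as entry 6.3.12 of Andrews--Berndt \cite{AndrewsBerndt2} and used as a black box in the proofs of Lemma~\ref{LemmaFOneMock1} and Proposition~\ref{prop:R2}. There is therefore no ``paper's own proof'' to compare against.

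As for your proposal itself: it is a strategy outline, not a proof. You describe two plausible routes (a confluent limit of Watson's transformation combined with a three-term relation for a non-terminating ${}_8\phi_7$, or a first-order functional equation in the parameters), but neither is carried out. In the first approach you never specify the actual specialization of the five parameters in Lemma~\ref{L:Watson}, and you acknowledge that ``the bulk of the bookkeeping'' remains; in the second, the crucial step---showing that the right-hand side satisfies the same recurrence $L(a,b,c)=q+\frac{q(1+aq)(1+bq)}{1+cq}L(aq,bq,cq)$---is asserted with ``should show'' but not verified, and no uniqueness argument or boundary condition is given to close the induction. Both approaches are reasonable heuristics and are in the spirit of how Andrews and Berndt treat such entries, but as written there is no proof here, only a plan.
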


We also make use of the Bailey pair machinery \cite[Chapter 3]{Andrews2}.
Recall that a pair of sequences $(\alpha_n,\beta_n)$ is called a {\it Bailey pair} relative to 
$(a,q)$ if
\begin{align*}
\beta_n&=\sum_{0\le j\le n}\frac{\alpha_j}{(q;q)_{n+j}(aq;q)_{n-j}}.
\end{align*}
Bailey's Lemma is as follows.
\begin{lemma}\cite[Theorem 3.4]{Andrews2}\label{L:BaileysLemma}
If $(\alpha_n,\beta_n)$ is a Bailey pair relative to 
$(a,q)$, then, assuming convergence conditions, 
\begin{align*}
\sum_{n\ge0}(\varrho_1,\varrho_2;q)_n\left(\frac{aq}{\varrho_1\varrho_2}\right)\beta_n
&=
\frac{ \left( \frac{aq}{\varrho_1},\frac{aq}{\varrho_2};q\right)_\infty}
{\left( aq, \frac{aq}{\varrho_1\varrho_2} ;q\right)_\infty}
\sum_{n\ge0}\frac{ (\varrho_1,\varrho_2;q)_n\left(\frac{aq}{\varrho_1\varrho_2}\right)}
{\left( \frac{aq}{\varrho_1},\frac{aq}{\varrho_2};q\right)_n }\alpha_n.
\end{align*}
\end{lemma}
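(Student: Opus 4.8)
The plan is to prove this identity by the classical substitution-and-interchange argument, reducing everything to a single application of the $q$-Gauss summation theorem. I would start from the defining relation of the Bailey pair, $\beta_n=\sum_{j=0}^{n}\frac{\alpha_j}{(q;q)_{n-j}(aq;q)_{n+j}}$, inserting it into the left-hand side
\[
\sum_{n\ge0}(\varrho_1,\varrho_2;q)_n\left(\frac{aq}{\varrho_1\varrho_2}\right)^{n}\beta_n ,
\]
and, under the stated convergence hypotheses (which make the double series absolutely convergent and hence freely rearrangeable), interchanging the two summations. Writing the outer index as $n=j+m$ with $m\ge0$, the left-hand side becomes
\[
\sum_{j\ge0}\alpha_j\sum_{m\ge0}\frac{(\varrho_1,\varrho_2;q)_{j+m}}{(q;q)_m(aq;q)_{2j+m}}\left(\frac{aq}{\varrho_1\varrho_2}\right)^{j+m}.
\]

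Next, I would separate the $j$-dependent factors from the inner sum using the elementary splitting identities $(\varrho;q)_{j+m}=(\varrho;q)_j(\varrho q^j;q)_m$ and $(aq;q)_{2j+m}=(aq;q)_{2j}(aq^{2j+1};q)_m$. After extracting $(\varrho_1,\varrho_2;q)_j(aq;q)_{2j}^{-1}(aq/(\varrho_1\varrho_2))^{j}$, the remaining inner sum over $m$ is a ${}_2\phi_1$ with numerator parameters $\varrho_1q^j,\varrho_2q^j$, denominator parameter $aq^{2j+1}$, and argument $aq/(\varrho_1\varrho_2)$. The key observation is that this argument is exactly $\frac{aq^{2j+1}}{(\varrho_1q^j)(\varrho_2q^j)}$, i.e.\ the argument $c/(ab)$ at which the $q$-Gauss theorem applies; since the $q$-Gauss summation formula follows from Heine's transformation (Lemma~\ref{L:Heine}), I may invoke it to obtain
\[
\sum_{m\ge0}\frac{(\varrho_1q^j,\varrho_2q^j;q)_m}{(q;q)_m(aq^{2j+1};q)_m}\left(\frac{aq}{\varrho_1\varrho_2}\right)^{m}
=\frac{(aq^{j+1}/\varrho_1,aq^{j+1}/\varrho_2;q)_\infty}{(aq^{2j+1},aq/(\varrho_1\varrho_2);q)_\infty}.
\]

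Finally, I would rewrite the infinite products via $(aq^{j+1}/\varrho_i;q)_\infty=(aq/\varrho_i;q)_\infty/(aq/\varrho_i;q)_j$ and $(aq^{2j+1};q)_\infty=(aq;q)_\infty/(aq;q)_{2j}$ and substitute back. The resulting $(aq;q)_{2j}$ cancels the $(aq;q)_{2j}^{-1}$ extracted earlier, the global product $\frac{(aq/\varrho_1,aq/\varrho_2;q)_\infty}{(aq,aq/(\varrho_1\varrho_2);q)_\infty}$ pulls out in front of the sum over $j$, and what remains is precisely the claimed right-hand side (with $j$ as summation variable). The only genuinely delicate steps are justifying the interchange of summation from the convergence conditions and recognizing that the inner series is in $q$-Gauss form; once the argument is identified as $c/(ab)$, the remainder is routine bookkeeping with Pochhammer shifts.
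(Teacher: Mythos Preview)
Your argument is the standard proof of Bailey's Lemma and is correct: substitute the Bailey-pair relation, interchange the order of summation, recognize the inner sum as a ${}_2\phi_1$ at the $q$-Gauss point, and collect Pochhammer shifts. (You have also tacitly, and rightly, restored the missing exponent $n$ on $\left(\tfrac{aq}{\varrho_1\varrho_2}\right)$ in the statement; as written in the paper both sides carry a typo.)

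There is nothing to compare against, since the paper does not prove this lemma at all; it is quoted from \cite[Theorem 3.4]{Andrews2} as a preliminary result. Your derivation is exactly the classical one given in that reference, so in that sense you and the cited source agree.
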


The following theorem of Lovejoy gives a convenient formula for constructing Bailey pairs.
\begin{lemma} \cite[Theorem 8]{Lovejoy1}\label{LemmaLovejoy}
The following  is a Bailey pair relative to  $(a,q)$:
\begin{align*}
\alpha_n 
&=
	\frac{\left( \frac{a}{b},\frac{a}{c},\frac{a}{d};q \right)_n \left(1-aq^{2n}\right) (-bcdq)^nq^{\frac12n(n-1)} }
	{(1-a)(bq,cq,dq;q)_n a^n}
	\sum_{0\leq j \leq n}
	\frac{(a;q)_{j-1} (b,c,d;q)_j \left(1-aq^{2j-1}\right)a^j }
		{\left( q,\frac{a}{b},\frac{a}{c},\frac{a}{d};q\right)_j (bcd)^j }\\
\beta_n &= \frac{\left( \frac{adq}{bc};q \right)_n}{(bq,cq,dq;q)_n}		
.
\end{align*}
\end{lemma}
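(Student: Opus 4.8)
The plan is to verify directly that the stated pair $(\alpha_n,\beta_n)$ satisfies the defining relation of a Bailey pair relative to $(a,q)$, namely that $\beta_n=\sum_{0\le j\le n}\alpha_j/[(q;q)_{n+j}(aq;q)_{n-j}]$. The crucial structural observation is that the inner sum appearing in the formula for $\alpha_n$ has summand depending only on its own index, so that $\alpha_j=P_j\,S_j$, where $P_j$ is the displayed prefactor and $S_j=\sum_{0\le k\le j}T_k$ is a partial sum of the fixed sequence $T_k=\frac{(a;q)_{k-1}(b,c,d;q)_k(1-aq^{2k-1})a^k}{(q,a/b,a/c,a/d;q)_k(bcd)^k}$. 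Substituting $\alpha_j=P_jS_j$ into the defining relation and interchanging the order of the two summations, I would rewrite the target as $\sum_{0\le k\le n}T_kF_k$, where $F_k:=\sum_{k\le j\le n}P_j/[(q;q)_{n+j}(aq;q)_{n-j}]$ is a tail of a single terminating series in $j$.

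The second step is to evaluate $F_k$. After the standard rewritings of $(q;q)_{n+j}$ and $(aq;q)_{n-j}$ as shifted $q$-Pochhammer symbols in the index $j$, the series $\sum_j P_j/[(q;q)_{n+j}(aq;q)_{n-j}]$ is very-well-poised: the factor $1-aq^{2j}$ in $P_j$ supplies the characteristic very-well-poised term, and the upper data are governed by $a/b,a/c,a/d$ together with the two terminating parameters produced by the Bailey kernel. The parameter shift $a\mapsto a/q$ recasts this series, and simultaneously the sequence $T_k$ whose shifted factor $1-aq^{2k-1}$ becomes the standard $1-(a/q)q^{2k}$, into genuine very-well-poised form with upper parameters $b,c,d$ and argument $aq/(bcd)$. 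This is precisely the shape governed by a terminating very-well-poised summation, which may be obtained as a specialization of Watson's transformation (Lemma \ref{L:Watson}). Because such a very-well-poised sum telescopes, its tail $F_k$ admits an explicit single-term closed form. With $F_k$ in hand, the products $T_kF_k$ should telescope in $k$: the role of the shifted factor $1-aq^{2k-1}$ is exactly to arrange $T_kF_k=H_k-H_{k+1}$ for an explicit $H_k$, so that summing over $0\le k\le n$ collapses to a boundary term which I expect to equal $(adq/bc;q)_n/(bq,cq,dq;q)_n=\beta_n$.

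I expect the main obstacle to be the bookkeeping concentrated in the middle step: correctly summing and truncating the inner very-well-poised series and matching the resulting partial-sum formula, where the interplay of the weights $q^{\frac12 j(j-1)}$, the sign factor $(-bcdq)^j$, and the very-well-poised Pochhammer factors must be tracked exactly. The substitution $a\mapsto a/q$ is the device that reconciles the shifted factor $1-aq^{2k-1}$ with the standard $1-a'q^{2k}$ and makes the telescoping transparent. An equivalent, and arguably cleaner, route would bypass the telescoping: one inverts the lower-triangular defining relation (by the classical Bailey-pair inversion) to produce $\alpha_n$ directly from the product $\beta_n$ as a single terminating balanced series, and then transforms that series into the claimed very-well-poised sum $S_n$ via a suitable specialization of Watson's transformation (Lemma \ref{L:Watson}); this trades the telescoping argument for one clean application of Watson's transformation, at the cost of invoking the inversion of the Bailey triangle.
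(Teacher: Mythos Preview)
The paper does not prove this lemma: it is quoted verbatim as a preliminary result from \cite[Theorem 8]{Lovejoy1} and no argument is given. So there is no ``paper's own proof'' to compare against.

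As for your proposal itself: your \emph{alternative} route in the final paragraph --- invert the Bailey relation to express $\alpha_n$ as a terminating balanced ${}_4\phi_3$ in terms of the product $\beta_n$, then apply Watson's transformation (Lemma~\ref{L:Watson}) to rewrite that ${}_4\phi_3$ as the very-well-poised sum $S_n$ --- is correct and is essentially how results of this shape are proved in the literature. If you carry out that version carefully (the shift $a\mapsto aq^{-1}$ you mention is exactly what aligns the factor $1-aq^{2j-1}$ with the standard very-well-poised weight), you will recover the stated $\alpha_n$.

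Your \emph{primary} approach, however, has a genuine gap. After the interchange you need the tail
\[
F_k=\sum_{j=k}^{n}\frac{P_j}{(q;q)_{n-j}(aq;q)_{n+j}}
\]
to admit a single-term closed form. You justify this by saying the underlying very-well-poised series ``telescopes,'' but terminating very-well-poised ${}_8\phi_7$'s are summed by Jackson's formula, not by an elementary telescoping identity, and their \emph{partial} (tail) sums do not in general collapse to a single product. Indeed, already the full sum $F_0$ is the Bailey transform of the three-parameter sequence $P_j$, and no Bailey pair with a product $\beta$ has that $\alpha$; so even $F_0$ is not a clean product, let alone $F_k$ for $k\ge1$. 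Without an explicit $F_k$, the subsequent claim that $T_kF_k=H_k-H_{k+1}$ cannot be made precise. I would abandon this line and carry out the inversion-plus-Watson argument you already identified.
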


\subsection{Analytic results}\label{SectionPrelimAnalysis}
To recognize the functions of interest for this paper as mixed mock modular forms, we recall
a few well-known functions.
For $z\in\mathbb{C}$, define the Jacobi theta function
\begin{equation*}
\vartheta(z;\tau) 
:= 
	\sum_{n\in\frac12+\Z} e^{2\pi in\left(z+\frac12\right)} q^{\frac{n^2}{2}}
	 = -iq^{\frac18} \zeta^{-\frac12} \left(q,\zeta,\zeta^{-1}q;q\right)_\infty
,
\qquad(\zeta:=e^{2\pi iz}).
\end{equation*}
Moreover we require Zwegers $\mu$-function for $z_1,z_2\in\C$,
\begin{equation*}
\mu(z_1,z_2;\tau) := \frac{e^{\pi iz_1}}{\vartheta(z_2;\tau)} \sum_{n\in\Z} \frac{(-1)^n e^{2\pi inz_2}q^{\frac{n(n+1)}{2}}}{1-e^{2\pi iz_1}q^n},
\end{equation*}
and for $\ell\in\N$ let the higher level Appel function be given by
\begin{equation*}
A_\ell(z_1,z_2;\tau) := e^{\pi i\ell z_1} \sum_{n\in\Z} \frac{(-1)^{\ell n} e^{2\pi inz_2} q^{\frac{\ell n(n+1)}{2}}}{1-e^{2\pi iz_1}q^n}.
\end{equation*}
The function $\vartheta(z;\tau)$ is a holomorphic Jacobi form
and the mock modular properties of
$\mu(z_1,z_2;\tau)$ and $A_\ell(z_1,z_2;\tau)$  are described in \cite{Zwegers1,Zwegers2}.

To prove Theorem \ref{TheoremAsymptotics},
we also require the following asymptotic behavior which follows directly from the 
modular transformation of the Dedekind $\eta$-function
\begin{equation}\label{Pochas}
\left(e^{-w};e^{-w}\right)_\infty \sim \sqrt{\tfrac{2\pi}{w}} e^{-\frac{\pi^2}{6w}} \qquad \mbox{as }w\to 0,
\end{equation}
where the limit is taken in any region $|\!\Arg(w)|<\theta$, for fixed $\theta<\frac{\pi}{2}$.
Moreover we need the following Tauberian Theorem. 
\begin{theorem}\cite[Theorem 14.4]{BringmannFolsomOnoRolen1}\label{Taub}
Let $f(q)=\sum_{n\geq 0} a(n) q^n$ be a power series with non-negative $a(n)$ 
that are monotonically increasing and have radius of convergence equal to $1$. 
Suppose that	
\begin{align*}
f\left(e^{-t}\right) &\sim \lambda t^\alpha e^{\frac{A}{t}} 
	&&\mbox{as }t\to 0^+
,\\
f\left(e^{-w}\right) &\ll  |w|^\alpha e^{\frac{A}{|w|}} 
	&&\mbox{as }w\to 0
	\mbox{ in each region } \left|\Arg(w)\right| < \theta < \tfrac{\pi}{2}
,
\end{align*}
for $A>0$, $\lambda,\alpha\in\R$. Then we have
\begin{equation*}
a(n) \sim \frac{\lambda}{2\sqrt{\pi}} \frac{A^{\frac{\alpha}{2}+\frac14}}{n^{\frac{\alpha}{2}+\frac34}} e^{2\sqrt{An}}
\qquad \mbox{as }n\to\infty.
\end{equation*}
\end{theorem}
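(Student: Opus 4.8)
\emph{The plan} is to prove this by the saddle point method (a form of Wright's circle method), extracting $a(n)$ from a Cauchy integral and localizing it near the dominant singularity $q=1$. For any $t>0$, Cauchy's formula on the circle $|q|=e^{-t}$, written with $q=e^{-w}$ and $w=t-i\phi$, gives
\begin{equation*}
a(n)=\frac{1}{2\pi}\int_{-\pi}^{\pi}f\left(e^{-t+i\phi}\right)e^{n(t-i\phi)}\,d\phi .
\end{equation*}
Near $\phi=0$ the integrand is modeled by $\lambda w^{\alpha}\exp\left(\frac{A}{w}+nw\right)$, and the exponent $\frac{A}{w}+nw$ has a saddle on the positive real axis at $w=\sqrt{A/n}$. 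I would therefore choose the radius $t=t_n:=\sqrt{A/n}$, so that the exponent attains the value $\frac{A}{t_n}+nt_n=2\sqrt{An}$, which already explains the factor $e^{2\sqrt{An}}$ in the conclusion.

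Next I would split the circle into a major arc $|\phi|\le\phi_0$ and a minor arc $\phi_0<|\phi|\le\pi$, with $\phi_0$ chosen so that $n^{-\frac34}\ll\phi_0\ll t_n$ (for instance $\phi_0=n^{-\frac35}$). On the major arc one has $\phi/t_n\to0$, so $w$ stays in a fixed cone $\left|\Arg(w)\right|<\theta<\frac{\pi}{2}$ while $|w|\to0$, and the hypothesized asymptotic $f(e^{-w})\sim\lambda w^{\alpha}e^{A/w}$ applies uniformly. Expanding $\frac{A}{w}+nw$ to second order about $w=t_n$ gives $2\sqrt{An}-\frac{n^{3/2}}{\sqrt{A}}\phi^{2}+\cdots$; replacing $w^{\alpha}$ by $t_n^{\alpha}$ and completing the Gaussian integral to all of $\R$ produces
\begin{equation*}
\frac{\lambda}{2\pi}\,t_n^{\alpha}\,e^{2\sqrt{An}}\int_{-\infty}^{\infty}e^{-\frac{n^{3/2}}{\sqrt{A}}\phi^{2}}\,d\phi=\frac{\lambda}{2\sqrt{\pi}}\,\frac{A^{\frac{\alpha}{2}+\frac14}}{n^{\frac{\alpha}{2}+\frac34}}\,e^{2\sqrt{An}},
\end{equation*}
which is precisely the claimed main term. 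Here one must check that the errors from truncating the Taylor expansion, from the replacement $w^{\alpha}\mapsto t_n^{\alpha}$, and from extending the Gaussian tails are all of smaller order, which is routine given the choice of $\phi_0$.

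The main obstacle is showing that the minor arc is $o$ of this main term. The real part of the exponent, $\Re\!\left(\frac{A}{w}\right)+nt_n=\frac{At_n}{t_n^{2}+\phi^{2}}+nt_n$, is strictly decreasing in $|\phi|$, which suggests splitting the minor arc into two regimes. For $\phi_0\le|\phi|\le Tt_n$ with $T=\tan\theta$ a large fixed constant (so $w$ remains in the cone), the cone bound $f(e^{-w})\ll|w|^{\alpha}e^{A/|w|}$ produces a factor $e^{-c\,\phi_0^{2}n^{3/2}}$ relative to the peak and is exponentially small because $\phi_0\gg n^{-3/4}$. For the remaining range $Tt_n<|\phi|\le\pi$, where $w$ leaves the cone and the cone bound is unavailable, I would use the monotonicity hypothesis: writing
\begin{equation*}
f(q)=\frac{1}{1-q}\sum_{m\ge0}\bigl(a(m)-a(m-1)\bigr)q^{m}
\end{equation*}
with $a(m)-a(m-1)\ge0$ and $a(-1):=0$, positivity gives
\begin{equation*}
\left|f\!\left(e^{-t_n+i\phi}\right)\right|\le\frac{\left(1-e^{-t_n}\right)f\!\left(e^{-t_n}\right)}{\left|1-e^{-t_n+i\phi}\right|},
\end{equation*}
so that away from $\phi=0$ the denominator is bounded below and the blow-up at $q=1$ is converted into a gain of a factor $\asymp t_n$.

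The delicate point — and where I expect the real work to lie — is the transition region $|\phi|\asymp t_n$: there $w$ already sits outside the fixed cone, yet the positivity factorization saves only a bounded factor, so neither estimate alone beats the main term by the required polynomial power. Reconciling the two bounds across this region (for instance by iterating the monotone structure, or by invoking the refined monotone Tauberian theorem of Ingham, which extracts the full asymptotic from the real-axis behaviour alone using that $a(n)$ is non-decreasing) is the crux of the proof. Once the minor arc is shown to be $o\!\left(t_n^{\alpha+\frac32}e^{2\sqrt{An}}\right)$, combining it with the major-arc main term yields the stated asymptotic for $a(n)$.
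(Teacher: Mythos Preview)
This theorem is not proved in the paper at all: it is quoted from \cite{BringmannFolsomOnoRolen1} as a preliminary tool, and the paper adds only a remark flagging that the cone-boundedness hypothesis is genuinely needed and pointing to \cite{BringmannJenningsShafferMahlburg1} for a careful treatment. There is therefore no proof here to compare your attempt against.

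As to your sketch itself, the difficulty you single out is a real gap, not a routine detail. On the range $Tt_n<|\phi|\le\pi$ your monotonicity factorization gives
\[
\left|f\!\left(e^{-t_n+i\phi}\right)\right|\ \ll\ \frac{t_n}{|\phi|}\,f\!\left(e^{-t_n}\right)\ \asymp\ \frac{t_n^{\alpha+1}}{|\phi|}\,e^{\sqrt{An}},
\]
and after multiplying by $e^{nt_n}=e^{\sqrt{An}}$ and integrating in $\phi$ this produces a contribution of order $t_n^{\alpha+1}(\log n)\,e^{2\sqrt{An}}$, which exceeds the main term by a factor $\asymp n^{1/4}\log n$. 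The cone bound cannot be pushed out to cover this range either, since the hypothesis supplies a separate implied constant for each fixed $\theta<\tfrac{\pi}{2}$ with no stated uniformity as $\theta\to\tfrac{\pi}{2}$. So the saddle-point scheme with only these two inputs does not close; your parenthetical suggestion to fall back on Ingham's monotone Tauberian theorem is not a local repair of the minor arc but essentially replaces the whole argument by a different one. That this step is genuinely delicate is precisely what the paper's remark following the theorem is acknowledging.
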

\begin{remark}
Theorem \ref{Taub} is commonly stated without the additional boundedness condition.
However, this seems to be in error and is discussed in detail in an upcoming
article \cite{BringmannJenningsShafferMahlburg1}. In the current article, we determine
the asymptotic behavior of functions via modular transformations, which actually imply
$f(e^{-w}) \sim \lambda w^\alpha e^{\frac{A}{w}}$ 
as $w\rightarrow0$ in each region $|\!\Arg(w)|<\theta<\frac{\pi}{2}$
(and as such, the required bound holds), so that this detail is not of major concern.
\end{remark}

\section{The function $\FOne(\zeta;q)$}
Since the series expansions of the summands of \eqref{Eq:FOneDef}
have both positive and negative coefficients, we interpret $\fone(n)$
and $\fone(m,n)$ both as the difference of two non-negative counts.
A {\it left-heavy overlined} unimodal sequence of size $n$ is a
unimodal sequence of size $n$ such that the parts up to and including 
all occurences of the peak form an overpartition with largest part overlined,
and the parts after the peak form an overpartition with all parts overlined.
Then $\fone(n)$ is the number of
left-heavy overlined unimodal sequences of size $n$ with
an even number of non-overlined parts minus those with an odd
number of non-overlined parts. Furthermore, $\fone(m,n)$ is the same
difference of counts as $\fone(n)$, but with the added restraint
that the rank of the strongly unimodal sequence consisting of the overlined parts is $m$.

\begin{example}
The left-heavy overlined unimodal sequences of $3$ are
$(\overline{3})$, $(1,\overline{2})$, 
$(\overline{1},\overline{2})$,
$(\overline{2},\overline{1})$, and
$(1,1,\overline{1})$. By accounting for the parity of the non-overlined parts,
we find $\fone(3)=3$.
The ranks of the strongly unimodal subsequences 
consisting of the overlined parts
are, respectively, $0$, $0$, $-1$, $1$, and $0$. 
\end{example}

The following lemma writes $\FOne(\zeta;q)$ in terms of
$R(\zeta;q)$ and $\overline{R}(\zeta;q)$.
\begin{lemma}\label{LemmaFOneMock1}
We have
\begin{align*}
(1-\zeta)\left(1-\zeta^{-1}\right)\FOne(\zeta;q)
&=
\overline{R}(\zeta;q)
-
\frac{\left(-\zeta q,-\zeta^{-1}q;q\right)_\infty}{(-q;q)_\infty}R(\zeta;q)
.
\end{align*}
\end{lemma}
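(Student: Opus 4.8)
The plan is to massage the defining series
$\FOne(\zeta;q)=\sum_{n\ge1}\frac{(-\zeta q,-\zeta^{-1}q;q)_{n-1}q^n}{(-q;q)_n}$
into a shape where one of the transformation lemmas from Section 2 applies, so that the right-hand side appears with its two natural pieces — a Dyson-rank-of-overpartitions term $\overline R(\zeta;q)$ and a copy of $R(\zeta;q)$ weighted by the infinite product $\frac{(-\zeta q,-\zeta^{-1}q;q)_\infty}{(-q;q)_\infty}$. Shifting the summation index to start at $n\ge0$ and clearing the $(-\zeta q,-\zeta^{-1}q;q)_{n-1}$ versus $(-\zeta q,-\zeta^{-1}q;q)_{n}$ discrepancy introduces the factor $(1-\zeta)(1-\zeta^{-1})$ (up to powers of $\zeta$), which explains why that product multiplies $\FOne$ on the left. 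Concretely, I expect to rewrite
\[
(1-\zeta)(1-\zeta^{-1})\FOne(\zeta;q)
=\sum_{n\ge0}\frac{(\zeta,\zeta^{-1};q)_{n}\,q^{n+1}}{(-q;q)_n}
\]
or a close variant, after absorbing the $n=1$ shift and simplifying $(1-\zeta)(-\zeta q;q)_{n-1}=-\zeta(\zeta;q)_n/(1-\zeta q^{? })$-type telescoping; the exact bookkeeping of $\zeta$ vs.\ $\zeta^{-1}$ powers is routine but must be done carefully.

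The next step is to recognize the resulting series as an instance of Lemma~\ref{L:AndrewsBerndtTheorem621} (or, alternatively, Heine's transformation in Lemma~\ref{L:Heine} followed by a second Heine iteration). Comparing
$\sum_{n\ge0}\frac{(B,-Abq;q)_nq^n}{(-aq,-bq;q)_n}$
with our series, one reads off the parameters by matching $-aq,-bq$ against the pair $-\zeta q,-\zeta^{-1}q$ and the numerator against the two remaining slots; then the right-hand side of Lemma~\ref{L:AndrewsBerndtTheorem621} produces two sums, one of which should telescope (via the defining series of $R$ and $\overline R$, or via a partial-fraction manipulation) into $-\frac{(-\zeta q,-\zeta^{-1}q;q)_\infty}{(-q;q)_\infty}R(\zeta;q)$ and the other into $\overline R(\zeta;q)$. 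The appearance of $(-1;q)_n=(1+1)(-q;q)_{n-1}\cdot\ldots$ inside $\overline R$ suggests that the $(1+b)$-type prefactor in Lemma~\ref{L:AndrewsBerndtTheorem621}, specialized appropriately, is exactly what generates the $(-1;q)_n$ in the overpartition rank; this is the mechanism I would exploit.

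The main obstacle will be the precise identification of the parameters and the verification that the two sums on the right-hand side of the transformation lemma collapse to $\overline R(\zeta;q)$ and to $R(\zeta;q)$ times the stated product — in particular, handling the $q^{n^2}$ in $R$ versus the $q^{n(n+1)/2}$ in $\overline R$, which means the two output sums of the lemma must be of genuinely different types, and I will likely need to re-sum one of them (possibly using the $q$-binomial theorem or a further Heine step) rather than read it off directly. A secondary, purely technical point is keeping track of the symmetry $\zeta\leftrightarrow\zeta^{-1}$ so that the final identity is manifestly symmetric, as it must be since $\fone(m,n)=\fone(-m,n)$ is forced by the combinatorial interpretation. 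Once the parameter matching is pinned down, the rest is a short computation, and analytic continuation in $\zeta$ removes any convergence restrictions needed to invoke the lemmas.
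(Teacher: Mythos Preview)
Your plan has two genuine problems. First, the mechanism you propose for producing the factor $(1-\zeta)(1-\zeta^{-1})$ is wrong: extending $(-\zeta q;q)_{n-1}$ by one term gives $(1+\zeta)(-\zeta q;q)_{n-1}=(-\zeta;q)_n$, so ``clearing the discrepancy'' in the numerator of $\FOne$ would introduce $(1+\zeta)(1+\zeta^{-1})$, not $(1-\zeta)(1-\zeta^{-1})$. Your displayed rewrite
\[
(1-\zeta)(1-\zeta^{-1})\FOne(\zeta;q)=\sum_{n\ge0}\frac{(\zeta,\zeta^{-1};q)_n\,q^{n+1}}{(-q;q)_n}
\]
is therefore false, and no ``close variant'' of it holds. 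In the paper's argument the factor $(1-\zeta)(1-\zeta^{-1})$ does not come from the input at all; it appears on the \emph{output} side because the transformation produces sums with $(\zeta,\zeta^{-1};q)_n$ in the denominator, and $(\zeta,\zeta^{-1};q)_n=(1-\zeta)(1-\zeta^{-1})(\zeta q,\zeta^{-1}q;q)_{n-1}$.

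Second, Lemma~\ref{L:AndrewsBerndtTheorem621} is the wrong tool. Its left-hand side has two Pochhammer factors $(-aq,-bq;q)_n$ in the denominator, whereas after shifting $n\mapsto n+1$ one gets
\[
(1+q)\FOne(\zeta;q)=\sum_{n\ge0}\frac{(-\zeta q,-\zeta^{-1}q;q)_n\,q^{n+1}}{(-q^2;q)_n},
\]
with a single denominator factor; you cannot match $(-aq,-bq;q)_n$ to $(-q^2;q)_n$ without degenerating a parameter in a way that breaks the identity (e.g.\ $b=0$ forces $-Abq=0$, killing one numerator slot you need). Your parenthetical ``match $-aq,-bq$ against the pair $-\zeta q,-\zeta^{-1}q$'' puts the $\zeta$-Pochhammers in the denominator, which is where they live in $\FOne$'s numerator --- the structures are transposed. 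Heine (Lemma~\ref{L:Heine}) is also not directly applicable, since the series above is not a ${}_2\phi_1$: there is no $(q;q)_n$ in the denominator. The identity that fits this shape exactly is Lemma~\ref{L:Entry6.3.12} with $a=\zeta$, $b=\zeta^{-1}$, $c=q$; the two sums it returns are, after the index shift $n\mapsto n+1$, precisely $\overline{R}(\zeta;q)$ and $R(\zeta;q)$, each divided by $(1-\zeta)(1-\zeta^{-1})$.
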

\begin{proof}
Lemma \ref{L:Entry6.3.12}
gives, shifting $n \mapsto n+1$ in the definition of $\FOne(\zeta;q)$, 
\begin{align*}
(1+q)\FOne(\zeta;q)
&=
	\sum_{n\ge0}\frac{\left(-\zeta q,-\zeta^{-1}q;q\right)_nq^{n+1}}{\left(-q^2;q\right)_n}
\\
&=
	\sum_{n\ge1} \frac{(1+q)(-1;q)_{n-1} q^{\frac{n(n-1)}{2}}}{\left(\zeta,\zeta^{-1};q\right)_n}
	-
	\frac{\left(-\zeta q,-\zeta^{-1}q;q\right)_\infty}{\left(-q^2;q\right)_\infty}
	\sum_{n\ge1} \frac{ q^{(n-1)^2}}{\left(\zeta,\zeta^{-1};q\right)_n}
\\
&=
	\frac{1+q}{(1-\zeta)\left(1-\zeta^{-1}\right)}\overline{R}(\zeta;q)
	-
	\frac{(1+q)\left(-\zeta q,-\zeta^{-1}q;q\right)_\infty}{(1-\zeta)\left(1-\zeta^{-1}\right)(-q;q)_\infty}
	R(\zeta;q),
\end{align*}
shifting $n \mapsto n+1$ in the definitions of 
$\overline{R}(\zeta;q)$ and $R(\zeta;q)$.
This gives the claim.
\end{proof}

It is not hard to conclude the following representation using (mock) modular objects,
which are defined in Section \ref{SectionPrelimAnalysis}.

\begin{corollary}\label{CorollaryFOneMockModular}
We have
\begin{align*}
\FOne(\zeta;q)
&=
	-\frac{2\zeta \eta(2\tau) A_2\left(z,\frac12;\tau\right)}{\left(1-\zeta^2\right)\eta(\tau)^2}
	-
	\frac{\vartheta\left(z+\frac12;\tau\right)  A_3(z,-\tau;\tau)}
	{\left(1-\zeta^2\right)\eta(\tau)\eta(2\tau)}
	-
	\frac{\zeta}{1-\zeta^2}
.
\end{align*}
\end{corollary}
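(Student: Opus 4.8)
The plan is to start from Lemma~\ref{LemmaFOneMock1}, which already expresses $(1-\zeta)(1-\zeta^{-1})\FOne(\zeta;q)$ as a difference of the overpartition Dyson rank $\overline R(\zeta;q)$ and a $q$-product times the partition rank $R(\zeta;q)$, and then rewrite each of the two terms on the right-hand side in terms of the analytic objects $\vartheta$, $\mu$, and the higher-level Appell functions $A_\ell$ defined in Section~\ref{SectionPrelimAnalysis}. First I would simplify the prefactor: since $\zeta = e^{2\pi i z}$, we have $(1-\zeta)(1-\zeta^{-1}) = -\zeta^{-1}(1-\zeta)^2$, and more usefully $(1-\zeta)(1-\zeta^{-1}) = 2 - \zeta - \zeta^{-1}$, so that dividing through produces the overall factor $\tfrac{-\zeta}{1-\zeta^2}$-type expressions one sees in the claimed formula (the stray constant $-\tfrac{\zeta}{1-\zeta^2}$ at the end should drop out of this bookkeeping once the $n=0$ terms are tracked carefully).

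\textbf{Rewriting the two pieces.} The term $\frac{(-\zeta q,-\zeta^{-1}q;q)_\infty}{(-q;q)_\infty}R(\zeta;q)$ is the standard one: it is well known (via Zwegers, and as used in \eqref{Eq:UnimodalToPartitionRank}) that $R(\zeta;q)$ decomposes into a theta quotient plus a $\mu$-function piece, and one should be able to repackage the product $\frac{(-\zeta q,-\zeta^{-1}q;q)_\infty}{(-q;q)_\infty}$ together with that decomposition as a single Appell function $A_3(z,-\tau;\tau)$ divided by an eta quotient, times $\vartheta(z+\tfrac12;\tau)$ — matching the middle term of the Corollary after noting that $(-q;q)_\infty = \eta(\tau)^{-1}\eta(2\tau)q^{-1/24}\cdot(\text{power of }q)$ and that the theta numerator for an overpartition-type product is $\vartheta(z+\tfrac12;\tau)$ up to an eta and a power of $q$. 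For the term $\overline R(\zeta;q)$, I would invoke the known mock modular representation of the Dyson rank of overpartitions from \cite{Lovejoy3} (or re-derive it directly from its Hecke-type/Appell expansion), which expresses $\overline R(\zeta;q)$ as an $A_2$-Appell function against an eta quotient; collecting constants gives the first term $-\frac{2\zeta\,\eta(2\tau)A_2(z,\tfrac12;\tau)}{(1-\zeta^2)\eta(\tau)^2}$. Throughout, I would convert every $(a;q)_\infty$ appearing into $\eta$'s and $\vartheta$'s using the product formula for $\vartheta(z;\tau)$ quoted in Section~\ref{SectionPrelimAnalysis}, and keep a running ledger of the accumulated powers of $q$ and factors of $\zeta^{\pm 1/2}$, $i$, etc.

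\textbf{Assembling and checking.} After substituting both rewritten pieces into Lemma~\ref{LemmaFOneMock1} and dividing by $(1-\zeta)(1-\zeta^{-1}) = -\zeta^{-1}(1-\zeta)^2$, I would simplify to the three-term expression in the Corollary; the leftover rational function in $\zeta$ that does not pair with a modular object collapses to the constant $-\tfrac{\zeta}{1-\zeta^2}$. As a sanity check I would verify the identity at $q=0$ (both sides should vanish, since $\FOne(\zeta;q) = q + O(q^2)$ and the modular objects have matching low-order expansions) and also compare the coefficient of $q^1$. The main obstacle I expect is purely bookkeeping: correctly tracking the fractional powers of $q$ and the half-integer powers of $\zeta$ that come out of the $\vartheta$ and $A_\ell$ definitions, and making sure the "essentially" in Theorem~\ref{TheoremMock} is witnessed exactly — i.e.\ that the non-modular multiplicative/additive junk is exactly the eta-quotient prefactors and the rational term displayed, with no residual $q$-power discrepancy. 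There is no deep new idea needed beyond the Zwegers machinery already cited; the content is in matching normalizations precisely.
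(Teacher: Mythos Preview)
Your approach is correct and is precisely what the paper does: the paper gives no detailed proof of the Corollary at all, merely stating that ``It is not hard to conclude the following representation using (mock) modular objects'' immediately after Lemma~\ref{LemmaFOneMock1}. Your plan---start from Lemma~\ref{LemmaFOneMock1}, insert the known generalized Lambert series expansions of $\overline{R}(\zeta;q)$ and $R(\zeta;q)$ (which package as $A_2$ and $A_3$ respectively), convert the infinite products to $\eta$'s and $\vartheta$'s, and track the leftover rational piece $-\tfrac{\zeta}{1-\zeta^2}$---is exactly the intended derivation, and the only work is the normalization bookkeeping you already flagged.
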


\section{The function $\FTwo(\zeta;q)$}
Before we state the combinatorial interpretation of $\ftwo(n)$
and $\ftwo(m,n)$, note that the summands of $\FTwo(\zeta;-q)$ have
non-negative coefficients since
\begin{align*}
\frac{1}{(q;-q)_{2n}}
&=
	\frac{(-q;q^2)_n}{(q^{2n+2};q^2)_n}
.
\end{align*}
A unimodal sequence is an {\it $M_2$-left-heavy overlined} 
unimodal sequence
if the
peak is even and appears overlined exactly once (suppose it is $\overline{2N}$),
the parts before and after $\overline{2N}$ form an overpartition, 
all overlined odd parts appear to the left of $\overline{2N}$,
and all non-overlined parts are at least $N+1$ and
 appear identically on the left and right of $\overline{2N}$.
Then $\ftwo(n)$ is the number of $M_2$-left-heavy overlined
unimodal sequences of size $n$ and 
$\ftwo(m,n)$ is the number of $M_2$-left-heavy overlined
unimodal sequences of size $n$ such that the strongly
unimodal sequence consisting of the overlined even parts has rank $m$.

\begin{example}
We have $\ftwo(7)=5$ since the relevant sequences are:
$(\overline{1},\overline{6})$,
$(\overline{3},\overline{4})$,
$(\overline{1},\overline{2},\overline{4})$,
$(\overline{1},\overline{4},\overline{2})$, and
$(\overline{1},2,\overline{2},2)$.
The residual ranks of these sequences are 
$0$, $0$, $-1$, $1$, and $0$, respectively.
\end{example}

The following proposition rewrites $\FTwo(\zeta;-q)$ in terms of generalized Lambert series.
\begin{proposition}\label{prop:F2}
We have
\begin{align*}
\FTwo(\zeta;-q)&=
	-\frac{\zeta q\left(-\zeta q^2,-\zeta^{-1}q^2,-q;q^2\right)_\infty}{(1-\zeta)(q;q)_\infty\left(-q^2;q^2\right)_\infty}
	\sum_{n\in\Z} \frac{(-1)^n q^{2n^2+3n}}{1+\zeta q^{2n+1}}
	\\
	&\quad
	+
	\frac{\zeta^{2}\left(q^2;q^4\right)_\infty}{2\left(1-\zeta^2\right)\left(q^4;q^4\right)_\infty}
	\sum_{n\in\mathbb{Z}}
	\frac{q^{n^2+3n+1}}{1+\zeta q^{2n+1}}
	-
	\frac{\zeta\left(q^2;q^4\right)_\infty}{2\left(1-\zeta^2\right)\left(q^4;q^4\right)_\infty}
	\sum_{n\in\mathbb{Z}}
	\frac{q^{n^2+n}}{1+\zeta q^{2n+1}}	
	.	
\end{align*}
\end{proposition}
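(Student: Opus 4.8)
The plan is to start from the definition
\[
\FTwo(\zeta;-q)=\sum_{n\geq 1}\frac{(-\zeta q^2,-\zeta^{-1}q^2;q^2)_{n-1}(-q)^{2n}}{((-q);(-q))_{2n}}
=\sum_{n\geq 1}\frac{(-\zeta q^2,-\zeta^{-1}q^2;q^2)_{n-1}q^{2n}(-q;q^2)_n}{(q^{2n+2};q^2)_n},
\]
using the identity for $1/(q;-q)_{2n}$ recorded just before the proposition. After shifting $n\mapsto n+1$ to clear the $n-1$ subscript, the summand takes the shape $\dfrac{(-\zeta q^2,-\zeta^{-1}q^2;q^2)_n q^{2n+2}(-q;q^2)_{n+1}}{(q^{2n+4};q^2)_{n+1}}$, and I would rewrite the denominator $(q^{2n+4};q^2)_{n+1}$ as a ratio of infinite products $(q^2;q^2)_\infty/(q^2;q^2)_{2n+3}$-type expressions so that everything is built from standard $q$-Pochhammer symbols in base $q^2$. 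The aim is to massage this into a $_2\phi_1$- or Bailey-pair-ready form in the base $q^2$.

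The main engine will be Lemma \ref{L:AndrewsBerndtTheorem621} (Andrews--Berndt Theorem 6.2.1), applied with base $q^2$ and suitable choices of $A$, $B$, $a$, $b$: the left side of that lemma is precisely a sum of the form $\sum_{n\geq0}\frac{(B,-Abq;q)_nq^n}{(-aq,-bq;q)_n}$, which is the template that $\FTwo(\zeta;-q)$ should fit after the rewriting above (with the $(-\zeta q^2,-\zeta^{-1}q^2;q^2)_n$ playing the role of the numerator Pochhammers, one of which should match $-Ab q^2$, and a $q$-power from the series forcing the relation $B\leftrightarrow$ an odd power of $q$). The right side of Lemma \ref{L:AndrewsBerndtTheorem621} produces exactly two pieces: one is an infinite product times a partial-theta-like sum $\sum_{n\ge0}\frac{(A^{-1};q)_n(\cdot)^n}{(-B/a;q)_{n+1}}$, the other a bilateral-looking sum $\sum_{n\ge0}\frac{(-a^{-1};q)_{n+1}(-ABq/a;q)_n(-b)^n}{(-B/a,Abq/a;q)_{n+1}}$. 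I expect the first piece, after recognizing $(A^{-1};q)_n$ as terminating when $A^{-1}$ is a power of $q$ — or more likely after a further application of a limiting case — to collapse into the generalized Lambert series $\sum_{n\in\Z}\frac{(-1)^nq^{2n^2+3n}}{1+\zeta q^{2n+1}}$ with the product prefactor $\frac{(-\zeta q^2,-\zeta^{-1}q^2,-q;q^2)_\infty}{(q;q)_\infty(-q^2;q^2)_\infty}$; the second piece should, after completing to a bilateral sum and splitting via partial fractions in $\zeta q^{2n+1}$, yield the two remaining Lambert series with the $(q^2;q^4)_\infty/(q^4;q^4)_\infty$ prefactors. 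Matching the $q$-powers $n^2+3n+1$ and $n^2+n$ in those last two sums will pin down the exact parameter substitution; the symmetry $n\mapsto -n$ or $n\mapsto -n-1$ inside the Lambert series will be used freely to put them in the displayed form.

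The hard part will be the bookkeeping in two places. First, there is a genuine issue of \emph{which} transformation to invoke: Lemma \ref{L:AndrewsBerndtTheorem621} may not apply directly to $\FTwo(\zeta;-q)$ but only after an intermediate Heine transformation (Lemma \ref{L:Heine}) or after splitting the $(-q;q^2)_{n+1}=(1+q)(-q^3;q^2)_n$ factor, so part of the work is arranging the series so that exactly the Pochhammer structure $(B,-Abq;q)_n/(-aq,-bq;q)_n$ appears with no leftover $n$-dependent factors outside. Second, the two sums on the right-hand side of Lemma \ref{L:AndrewsBerndtTheorem621} are \emph{one-sided} (partial) sums, whereas the proposition's three sums are \emph{bilateral} over $n\in\Z$; turning one-sided Lambert-type sums into the symmetric bilateral form requires either completing the square and using a reflection $n\mapsto -n$ together with a Jacobi triple product evaluation of the resulting $\sum_{n<0}$ tail, or recognizing that the combination of the two one-sided pieces is exactly the bilateralization. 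Getting the infinite-product prefactors to match — in particular reconciling the $(q;q)_\infty(-q^2;q^2)_\infty$ denominator of the first term against the $(q^4;q^4)_\infty$ denominators of the other two, which differ by elementary product rearrangements such as $(q;q)_\infty=(q;q^2)_\infty(q^2;q^2)_\infty$ and $(q^2;q^2)_\infty=(q^2;q^4)_\infty(q^4;q^4)_\infty$ — is where sign and power-of-$q$ errors are most likely, so I would track the overall prefactor of each of the three terms carefully and verify by comparing the lowest-order $q$-coefficient of both sides.
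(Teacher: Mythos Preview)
Your first move is right and matches the paper exactly: after shifting $n\mapsto n+1$ and writing $(q;-q)_{2n+2}=(q;q^2)_{n+1}(-q^2;q^2)_{n+1}$, you get
\[
q^{-2}(1-q)\left(1+q^2\right)\FTwo(\zeta;-q)=\sum_{n\ge0}\frac{(-\zeta q^2,-\zeta^{-1}q^2;q^2)_n\,q^{2n}}{(q^3,-q^4;q^2)_n},
\]
and this is precisely the shape of Lemma~\ref{L:AndrewsBerndtTheorem621} with $q\mapsto q^2$, $a=-q$, $b=q^2$, $A=\zeta^{-1}q^{-2}$, $B=-\zeta q^2$. So far so good, and your guess that the first Andrews--Berndt piece produces the first Lambert term and the second piece produces the remaining two (via partial fractions) is also correct.

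The gap is in what you expect to come out of Lemma~\ref{L:AndrewsBerndtTheorem621}. Its two output sums are \emph{not} one-sided Lambert-type sums; they are genuine ${}_2\phi_1$-shaped series with Pochhammer numerators and denominators (for the first piece, $\sum_{n\ge0}\frac{(\zeta q^2;q^2)_n(-\zeta^{-1}q)^n}{(-\zeta q^3;q^2)_n}$; for the second, $\sum_{n\ge0}\frac{(q^2;q^4)_n(-q^2)^n}{(-\zeta q^3,-\zeta^{-1}q^3;q^2)_n}$). Your proposed mechanisms --- ``$(A^{-1};q)_n$ terminating'', ``a limiting case'', or ``reflection plus Jacobi triple product to bilateralize'' --- do not apply to such series: $A^{-1}=\zeta q^2$ does not truncate, and there is no quadratic exponent yet to complete the square on. The step that is actually needed, and that the paper supplies, is Watson's transformation (Lemma~\ref{L:Watson}): for the first piece one first applies Heine (Lemma~\ref{L:Heine}) to trade the $(\zeta q^2;q^2)_n$ numerator for a $(-\zeta q,-\zeta^{-1}q;q^2)_n$ one, and \emph{then} Watson (with $a=q^2$, $b=-q$, $c\to\infty$, $d=-\zeta q$, $e=-\zeta^{-1}q$) produces the bilateral $\sum_{n\in\Z}\frac{(-1)^nq^{2n^2+3n}}{1+\zeta q^{2n+1}}$ directly; for the second piece Watson alone (with $a=q^2$, $b=-\zeta q$, $c=-\zeta^{-1}q$, $d=q$, $e=-q$) gives a bilateral sum with denominator $(1+\zeta q^{2n+1})(1+\zeta^{-1}q^{2n+1})$, after which your partial-fraction idea and the substitution $n\mapsto -n-1$ finish the job. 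Without naming Watson (or an equivalent very-well-poised transformation) you have no concrete way to generate the $q^{2n^2+3n}$ and $q^{n^2+3n}$ exponents, so the sketch as written does not close.
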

\begin{proof}
Applying Lemma \ref{L:AndrewsBerndtTheorem621}
with $q\mapsto q^2, a=-q, b=q^2, A=\zeta^{-1}q^{-2}$, and $B=-\zeta q^2$, we find that
\begin{align}\label{E:F2Sums1}
&q^{-2}(1-q)\left(1+q^2\right)\FTwo(\zeta;-q)
=
	\sum_{n\ge0}\frac{\left(-\zeta q^2,-\zeta^{-1}q^2;q^2\right)_nq^{2n}}{\left(q^3,-q^4;q^2\right)_n} \notag
\\
&=
	\frac{q^{-1}\left(-\zeta q^2,-\zeta^{-1}q^2;q^2\right)_\infty}{(1+\zeta q)\left(q^3,-q^4;q^2\right)_\infty}
	\sum_{n\ge0}\frac{\left(\zeta q^2;q^2\right)_n(-1)^n q^n \zeta^{-n}}{\left(-\zeta q^3;q^2\right)_{n}} \notag
	\\&\hspace{6cm}
	-
	\frac{q^{-1}(1-q)\left(1+q^2\right)}{(1+\zeta q)\left(1+\zeta^{-1}q\right)}
	\sum_{n\ge0}
	\frac{\left(q^2;q^4\right)_{n} (-1)^n q^{2n}}
	{\left(-\zeta q^3,-\zeta^{-1}q^3;q^2\right)_{n}}
.
\end{align}

We handle the two sums in \eqref{E:F2Sums1} separately. 
For the first, we apply Lemma \ref{L:Heine} with 
$q\mapsto q^2$, $a=\zeta q^2$, $b=q^2$, $c=-\zeta q^3$, and $t=-\zeta^{-1}q$, which gives that
\begin{align}\label{E:F2E1}
\sum_{n\ge0}\frac{\left(\zeta q^2;q^2\right)_n(-1)^n q^n \zeta^{-n}}{\left(-\zeta q^3;q^2\right)_{n}}
&=
	\frac{\left(q^2,-q^3;q^2\right)_\infty}{\left(-\zeta q^{3},-\zeta^{-1}q;q^2\right)_\infty}
	\sum_{n\ge0} \frac{\left(-\zeta q,-\zeta^{-1}q;q^2\right)_n q^{2n}}{\left(-q^3,q^2;q^2\right)_n}
.
\end{align}
Next we take $q\mapsto q^2$, $a=q^2$, $b=-q$, $c\rightarrow\infty$, $d=-\zeta q$, and $e=-\zeta^{-1}q$
in Lemma \ref{L:Watson} to find that
\begin{align}\label{E:F2E2}
\sum_{n\ge0}\frac{\left(-\zeta q,-\zeta^{-1}q;q^2\right)_nq^{2n}}{\left(q^2,-q^3;q^2\right)_n}=
	\frac{-\zeta\left(-\zeta q,-\zeta^{-1}q;q^2\right)_\infty}{(1-q)(1-\zeta)\left(q^4,q^2;q^2\right)_\infty}
	\sum_{n\in\Z} \frac{(-1)^n q^{2n^2+3n}}{1+\zeta q^{2n+1}}.
\end{align}
Combining \eqref{E:F2E1} and \eqref{E:F2E2} gives the first summand in the proposition.

For the the second series in \eqref{E:F2Sums1} we apply Lemma \ref{L:Watson}
with $q\mapsto q^2$, $a=q^2$, $b=-\zeta q$, $c=-\zeta^{-1}q$, $d=q$, and $e=-q$. 
This gives that
\begin{align*}
&\sum_{n\ge0}\frac{\left(q^2;q^4\right)_n(-1)^nq^{2n}}{\left(-\zeta q^3,-\zeta^{-1}q^3;q^2\right)_n}
=
	\frac{(1+\zeta q)\left(1+\zeta^{-1}q\right)\left(q^6;q^4\right)_\infty}{2\left(-q^2,q^4;q^2\right)_\infty}
	\sum_{n\in\mathbb{Z}}
	\frac{q^{n^2+3n}}{\left(1+\zeta q^{2n+1}\right) \left(1+\zeta^{-1}q^{2n+1}\right)}
\\
&=
	\frac{(1+\zeta q)\left(1+\zeta^{-1}q\right)\left(q^6;q^4\right)_\infty}{2\left(1-\zeta^2\right)\left(-q^2,q^4;q^2\right)_\infty}
	\sum_{n\in\mathbb{Z}}
	\frac{q^{n^2+3n}}{1+\zeta^{-1}q^{2n+1}}
	\\&\hspace{7cm}
	-
	\frac{\zeta^{2}(1+\zeta q)\left(1+\zeta^{-1}q\right)\left(q^6;q^4\right)_\infty}{2\left(1-\zeta^2\right)\left(-q^2,q^4;q^2\right)_\infty}
	\sum_{n\in\mathbb{Z}}
	\frac{q^{n^2+3n}}{1+\zeta q^{2n+1}}
.
\end{align*}
Letting $n\mapsto -n-1$ in the last sum and combining terms gives the claim.
\end{proof}

We next rewrite $\FTwo$ in terms of (mock) modular objects. 
\begin{corollary}\label{cor:F2}
We have
\begin{multline*}
\FTwo(\zeta;-q) 
= 
	-\frac{\zeta^{\frac12}\eta(2\tau)^2 \vartheta\left(z+\frac12;2\tau\right)}{\left(1-\zeta^2\right)\eta(\tau)^2\eta(4\tau)^2}
		A_2\left(z+\frac12+\tau,\frac12+\tau;2\tau\right)
	\\
	- \frac{i\zeta^{\frac12}q^{-\frac14}}{1-\zeta^2}\left(2\mu\left(z+\frac12+\tau,\frac12;2\tau\right)+i\zeta^{\frac12}q^{\frac14}\right).
\end{multline*}
\end{corollary}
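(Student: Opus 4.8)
The plan is to start from the generalized Lambert series expression for $\FTwo(\zeta;-q)$ furnished by Proposition \ref{prop:F2} and convert each of the three sums, one at a time, into the $\mu$-function and level-$2$ Appell function $A_2$, keeping careful track of the theta-quotient prefactors. The guiding identity throughout is the definition of $\mu(z_1,z_2;\tau)$, which packages a sum of the shape $\sum_n (-1)^n e^{2\pi i n z_2} q^{n(n+1)/2}/(1-e^{2\pi i z_1}q^n)$ together with a $1/\vartheta(z_2;\tau)$ factor, and the analogous definition of $A_\ell(z_1,z_2;\tau)$ for the $q^{2n^2+\cdots}$-type sum. First I would rewrite the three Pochhammer prefactors in Proposition \ref{prop:F2} — namely $(-\zeta q^2,-\zeta^{-1}q^2,-q;q^2)_\infty/((q;q)_\infty(-q^2;q^2)_\infty)$ for the first sum and $(q^2;q^4)_\infty/((q^4;q^4)_\infty)$ for the second and third — entirely in terms of $\eta(\tau),\eta(2\tau),\eta(4\tau)$ and $\vartheta(z+\tfrac12;2\tau)$, using $\eta(\tau)=q^{1/24}(q;q)_\infty$, the theta product formula $\vartheta(z;\tau)=-iq^{1/8}\zeta^{-1/2}(q,\zeta,\zeta^{-1}q;q)_\infty$, and standard dissections such as $(q;q)_\infty=(q;q^2)_\infty(q^2;q^2)_\infty$ and $(q^2;q^2)_\infty=(q^2;q^4)_\infty(q^4;q^4)_\infty$. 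This is a bookkeeping step, but it is where the fractional power $q^{-1/4}$ and the $\zeta^{1/2}$ in the final statement will be pinned down, so it must be done with the signs and powers of $q$ exact.

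Next I would treat the first Lambert sum $\sum_{n\in\Z}(-1)^n q^{2n^2+3n}/(1+\zeta q^{2n+1})$. Writing $1+\zeta q^{2n+1}=1-(-\zeta q)q^{2n}$ and completing the quadratic exponent to the form $\tfrac{2n(n+1)}{2}+\cdots$ after a shift, this sum matches $A_2(z_1,z_2;2\tau)$ for suitable $z_1,z_2$; comparing with the target, the correct arguments are $z_1=z+\tfrac12+\tau$ and $z_2=\tfrac12+\tau$, so this sum produces the term $A_2(z+\tfrac12+\tau,\tfrac12+\tau;2\tau)$ with prefactor $\eta(2\tau)^2\vartheta(z+\tfrac12;2\tau)/(\eta(\tau)^2\eta(4\tau)^2)$ up to the $\zeta^{1/2}$-power and sign, which the prefactor computation from the previous step supplies. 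The second and third Lambert sums, $\sum_n q^{n^2+3n+1}/(1+\zeta q^{2n+1})$ and $\sum_n q^{n^2+n}/(1+\zeta q^{2n+1})$, each have a quadratic exponent $n^2+n=\tfrac{2\cdot n(n+1)}{2}$ (and $n^2+3n+1=(n+1)^2+(n+1)-1$ after a shift $n\mapsto n-1$), so modulo completing squares and absorbing the leftover constant powers of $q$ and factors of $\zeta$, each is exactly a $\mu$-type sum at modulus $2\tau$: after restoring the $1/\vartheta(\cdot;2\tau)$ factor implicit in $\mu$, these two sums combine — there is a known two-term relation expressing the difference of two such $\mu$'s with shifted elliptic variables as a single $\mu$ plus a Jacobi-theta correction, or one can simply shift $n$ in one of them and add — into the single term $2\mu(z+\tfrac12+\tau,\tfrac12;2\tau)$ together with the elementary correction $i\zeta^{1/2}q^{1/4}$ that appears inside the parentheses of the final formula.

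The main obstacle I expect is not conceptual but the precise reconciliation of prefactors: after the substitutions $q\mapsto q^2$ and the square-completions, each of the three pieces carries its own half-integer power of $q$, its own power of $\zeta$, and a ratio of infinite products, and these must be matched against the compact $\eta$-quotient $\eta(2\tau)^2\vartheta(z+\tfrac12;2\tau)/((1-\zeta^2)\eta(\tau)^2\eta(4\tau)^2)$ and the free term $-i\zeta^{1/2}q^{-1/4}/(1-\zeta^2)$ in the statement. The cleanest route is to verify the identity as an identity of $q$-series in two variables: both sides are meromorphic in $\zeta$ with the same poles coming from $1-\zeta^2$, and a finite check of the first several $q$-coefficients after clearing that denominator will confirm the normalization once the structural matching above is in place. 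The second and third sums collapsing to one $\mu$ is the only step with any subtlety, and it is handled by the elementary shift $n\mapsto n-1$ in $\sum_n q^{n^2+3n+1}/(1+\zeta q^{2n+1})$, which turns it into $\zeta\, q^{-1}\sum_n q^{n^2+n}/(1+\zeta^{-1}q^{2n-1})$ and, after the reflection $n\mapsto -n$, into a multiple of the same $\mu$-sum as the third, up to the explicit boundary term; combining then yields the factor $2$ and the additive constant in the displayed formula.
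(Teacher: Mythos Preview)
Your approach is correct and is essentially what the paper intends: the corollary is stated without proof as a direct rewriting of Proposition~\ref{prop:F2}, so starting from that proposition and identifying each Lambert sum with $A_2$ or $\mu$ (and the prefactors with $\eta$- and $\vartheta$-quotients) is exactly the route.

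One simplification for the step you flag as the ``only subtlety'': your shift-and-reflect maneuver on the second sum does not literally produce ``a multiple of the same $\mu$-sum'' (it produces the sum with $\zeta\mapsto\zeta^{-1}$, which you would then have to relate back via Zwegers's elliptic transformation for $\mu$). A cleaner way is to observe directly that
\[
\zeta\, q^{n^2+3n+1} + q^{n^2+n} = q^{n^2+n}\bigl(1+\zeta q^{2n+1}\bigr),
\]
so the denominators cancel and
\[
\zeta\sum_{n\in\Z}\frac{q^{n^2+3n+1}}{1+\zeta q^{2n+1}} + \sum_{n\in\Z}\frac{q^{n^2+n}}{1+\zeta q^{2n+1}}
= \sum_{n\in\Z} q^{n^2+n} = \frac{2(q^4;q^4)_\infty}{(q^2;q^4)_\infty}.
\]
Using this to eliminate the second sum in favor of the third plus a theta constant, the combination
\(
\tfrac{\zeta^{2}}{2}S_2 - \tfrac{\zeta}{2}S_3 = \tfrac{\zeta}{2}\cdot\tfrac{2(q^4;q^4)_\infty}{(q^2;q^4)_\infty} - \zeta S_3
\)
falls out immediately, and the first piece gives the additive $i\zeta^{1/2}q^{1/4}$ inside the parentheses while the second gives the $2\mu$ term. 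This avoids invoking any transformation law for $\mu$ and makes the ``boundary term'' explicit.
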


The asymptotic formula for $\ftwo(n)$ in Theorem \ref{TheoremAsymptotics} follows
from Theorem \ref{Taub}, once we establish that
$\ftwo(n)$ is monotonic.
\begin{lemma} 
For $n\in \N_0$, we have $\ftwo(n+1)\ge \ftwo(n)$.
\end{lemma}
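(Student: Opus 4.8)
The plan is to realize $\ftwo(n)$ as the number of combinatorial objects in a set that visibly injects into the corresponding set for $n+1$. From the combinatorial interpretation given at the start of this section, $\ftwo(n)$ counts $M_2$-left-heavy overlined unimodal sequences of size $n$; equivalently, from the $q$-series
\begin{align*}
\FTwo(\zeta;-q)
=
\sum_{n\ge1} \frac{\left(-\zeta q^2,-\zeta^{-1}q^2;q^2\right)_{n-1}q^{2n}}{(-q;q)_{2n}}
,
\end{align*}
specialized at $\zeta=1$, one has $\FTwo(q)=\sum_{n\ge1}\frac{(-q^2;q^2)_{n-1}^2 q^{2n}}{(-q;q)_{2n}}$, and using $\frac{1}{(q;-q)_{2n}}=\frac{(-q;q^2)_n}{(q^{2n+2};q^2)_n}$ (noted just above the combinatorial description) the coefficients are manifestly nonnegative. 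So $\ftwo(n)=\#\mathcal{S}_n$ for an explicit set $\mathcal{S}_n$.

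First I would fix a canonical injection $\mathcal{S}_n\hookrightarrow\mathcal{S}_{n+1}$. The most robust choice is to add $1$ to the peak together with a bookkeeping adjustment: given a sequence with peak $\overline{2N}$, we cannot simply change the peak to $\overline{2N+1}$ since the peak must be even, so instead I would add one new overlined part (or increment an existing small overlined part) in a way dictated by a fixed priority rule, and check that the resulting object still satisfies all four defining conditions ($M_2$ parity of the peak, overpartition structure on each side, overlined odd parts only to the left, non-overlined parts $\ge N+1$ appearing symmetrically). Concretely, inserting a single new overlined part equal to $1$ on the far left is always legal when no $\overline{1}$ is present, and when $\overline{1}$ is already present one increments the peak by $2$ and removes an $\overline{1}$ from the left — one must verify that exactly one of these two moves applies and that the map is injective. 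An alternative, and probably cleaner, route is purely analytic: show term-by-term that the power series $\frac{q^{2n}(-q^2;q^2)_{n-1}^2}{(-q;q)_{2n}}$, summed, has the property that $(1-q)\FTwo(q)$ still has nonnegative coefficients, since $(1-q)\sum a(n)q^n=\sum(a(n)-a(n-1))q^n$; that is exactly the claim. For this one would manipulate $\frac{1}{(-q;q)_{2n}}-\frac{1}{(-q;q)_{2n+2}}\cdot(\text{shift})$ into a visibly nonnegative $q$-series, or more simply observe that $\FTwo(q)=\sum_{n\ge1}\frac{(-q^2;q^2)_{n-1}^2(-q;q^2)_n q^{2n}}{(q^{2n+2};q^2)_n(-q^2;q^2)_n}$ and that multiplying a nonnegative power series by $\frac{1}{1-q}$ preserves nonnegativity of partial sums in the appropriate sense — but this needs the monotonicity of $a(n)$, so it is circular unless handled carefully.

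Thus the honest approach is the combinatorial injection, and the main obstacle is precisely \emph{checking that the insertion rule preserves the ``non-overlined parts are $\ge N+1$'' condition} when $N$ changes, since bumping the peak from $2N$ to $2N+2$ raises the threshold from $N+1$ to $N+2$, potentially invalidating a non-overlined part equal to $N+1$. I would resolve this by choosing the insertion move that never increments the peak unless there are no non-overlined parts equal to $N+1$ present, and splitting into cases accordingly; the bookkeeping is routine but must be done exhaustively to confirm injectivity. Once the injection $\mathcal{S}_n\hookrightarrow\mathcal{S}_{n+1}$ is established, $\ftwo(n+1)=\#\mathcal{S}_{n+1}\ge\#\mathcal{S}_n=\ftwo(n)$, which is the claim.
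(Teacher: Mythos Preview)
Your proposal is a plan, not a proof, and the gap you flag is real and not routine. You reduce everything to the combinatorial injection and note that when $\overline{1}$ is already present and a non-overlined part equal to $N+1$ occurs, the peak-increment move is illegal; you then say you would ``split into cases accordingly.'' But you never specify what move to perform in exactly that case, and at least one concrete instance shows it cannot be patched by a local tweak. Among the five size-$7$ sequences, $(\overline{1},2,\overline{2},2)$ has $\overline{1}$ present, peak $\overline{2}$, $N=1$, and a non-overlined part $N+1=2$; neither of your two moves applies. Since $\ftwo(7)=\ftwo(8)=5$, the injection must be a bijection at this step, and the only unclaimed size-$8$ object after your map handles the other four inputs is $(\overline{2},\overline{4},\overline{2})$. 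Whatever third move you invent must send $(\overline{1},2,\overline{2},2)\mapsto(\overline{2},\overline{4},\overline{2})$, which is not a simple insertion or peak shift. More generally, the paper records that the individual layers $F_1(q)$ and $F_2(q)$ of $(1-q)\FTwo(1;-q)$ have negative coefficients, so any injection must genuinely mix layers in a nontrivial way; this is evidence that the remaining ``bookkeeping'' is not routine.

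The paper takes the analytic route you set aside as potentially circular, but it is not circular as executed there. One writes
\[
(1-q)\FTwo(1;-q)=\sum_{n\ge1}F_n(q),\qquad F_n(q)=\frac{(-q^2;q^2)_{n-1}\,q^{2n}}{(1+q^{2n})(q^3;q^2)_{n-1}},
\]
proves $F_n(q)\succeq 0$ for $n\ge3$ by a short partition argument on the simpler function $G_n(q)$ (replacing an even number of copies of $2n$ by copies of $3$ and $2n-3$), and then verifies $F_1+F_2+F_3+F_4\succeq0$ by explicit rational-function regrouping. No monotonicity of $\ftwo(n)$ is assumed anywhere, so there is no circularity; the point is that one proves nonnegativity of $(1-q)\FTwo(1;-q)$ termwise (up to a finite correction), rather than trying to deduce it from properties of $\FTwo(1;-q)$ itself.
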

\begin{proof}
To prove the claim, we show that
$(1-q)\FTwo(1;-q)$ has non-negative coefficients. For this, we note that
\begin{align*}
(1-q)\FTwo(1;-q)
&=
\sum_{n\ge1}F_n(q),\qquad \textnormal{where }F_{n}(q) := \frac{\left(-q^2;q^2\right)_{n-1}q^{2n}}{\left(1+q^{2n}\right) \left(q^3;q^2\right)_{n-1}}.
\end{align*}
We first verify that $F_{n}(q)$ has 
non-negative coefficients for $n\ge3$. 
Given two power series $A(q)$ and $B(q)$, write
$A(q)\succeq B(q)$ to indicate that $A(q)-B(q)$ has non-negative coefficients.
We see that
\begin{gather*}
G_n(q) = \sum_{m\ge0} (g_{n,o}(m)-g_{n,e}(m))q^m,
\qquad\mbox{where }\\[-0.5ex]
G_{3}(q) := \frac{q^{6}}{\left(1-q^3\right)\left(1+q^6\right)},\qquad \qquad 
G_n(q) := \frac{q^{2n}}{\left(1-q^3\right)\left(1-q^{2n-3}\right)\left(1+q^{2n}\right)}
\,\,\mbox{ for } n\ge4,
\end{gather*}
and $g_{n,o}(m)$ ($g_{n,e}(m)$, resp.\@) is the number of 
partitions of $m$ with largest part $2n$, where the only other allowed parts
are $3$ and $2n-3$, and $2n$ appears an odd (even, resp.\@) number of 
times. 
Taking an occurrence of $2n$ and replacing it by
$3$ and $2n-3$ gives an injection from the partitions counted by $g_{n,e}(m)$
to those of $g_{n,o}(m)$. Thus $G_n(q)\succeq0$, which implies 
$F_{n}(q)\succeq0$ for $n\ge3$. However, $F_{1}(q)$ and $F_{2}(q)$ do have negative coefficients.
By a careful grouping of $F_{1}(q)+F_{2}(q)+F_{3}(q)+F_{4}(q)$ as rational
functions, we find  $F_{1}(q)+F_{2}(q)+F_{3}(q)+F_{4}(q)\succeq0$.
Thus $\ftwo(n+1)\ge \ftwo(n)$ for all $n$. We carefully group $F_{1}(q)+F_{3}(q)$ and $F_{2}(q)+F_{4}(q)$ as follows,
\begin{align*}
F_{1}(q)+F_{3}(q)
&=
	\frac{\left(1+q^4+q^9\right)q^2}{1-q^{12}}
	+
	\frac{\left(q^4+q^7+2q^8+2q^{11}+q^{15}+q^{19} \right)q^2}{\left(1-q^5\right)\left(1-q^{12}\right)}
	- q^4
,\\
F_{2}(q)+F_{4}(q)
&=
	\frac{\left(1+q^2\right)q^{4}}{\left(1-q^3\right)\left(1+q^4\right)}
	+
	\frac{\left(1+q^2\right)\left(1+q^4\right)\left(1+q^6\right)q^{8}}{\left(1-q^3\right)\left(1-q^5\right)\left(1-q^7\right)\left(1+q^8\right)}
\\
&\succeq
	\frac{\left(1+q^2\right)q^{4}}{\left(1-q^3\right)\left(1+q^4\right)}
	+
	\frac{\left(1+q^2\right)\left(1+q^4\right)\left(1+q^6\right)q^{8}}{\left(1-q^3\right)\left(1-q^5\right)\left(1+q^8\right)}
\\
&=
	\frac{\left(1+2q+q^2+q^4+q^7+q^8+q^{10}\right)q^{13}}{\left(1-q^5\right)\left(1-q^{16}\right)}
	+
	\frac{\left(1+q^2+2q^8+q^{10}+2q^{13}+q^{19}\right)q^4}
	{\left(1-q^3\right)\left(1-q^{16}\right)}
,
\end{align*}
where we make use of the fact that $G_4(q)\succeq0$.
We then find $F_{1}(q)+F_{2}(q)+F_{3}(q)+F_{4}(q)\succeq0$,
and thus $\ftwo(n+1)\ge \ftwo(n)$ for all $n$.
\end{proof}

The following calculation gives the asymptotics of $\ftwo(n)$.
\begin{proof}[Proof of the asymptotics for $\ftwo(n)$ in Theorem \ref{TheoremAsymptotics}]
We begin with the representation in \eqref{E:F2Sums1},
\begin{gather*}
\FTwo(1;-q)
=
	\frac{q\left(-q^2;q^2\right)_\infty}{\left(q;q^2\right)_\infty}\sum_{n\ge0}\frac{\left(q^2;q^2\right)_n(-1)^nq^n}{\left(-q;q^2\right)_{n+1}}
	-
	q\sum_{n\ge0}\frac{\left(q^2;q^4\right)_n(-1)^nq^{2n}}{\left(-q;q^2\right)_{n+1}^2}
.
\end{gather*}
If $q=e^{-w}$ and $w\rightarrow0$ in a region where $|\!\Arg(w)|<\theta$,
we have $q\rightarrow1$ and the sums become $\frac12$ and $\frac14$ respectively.
Thus
\begin{align*}
\FTwo(1;-e^{-w}) 
&\sim 
	\frac{\left(-e^{-2w};e^{-2w}\right)_\infty}{2\left(e^{-w};e^{-2w}\right)_\infty}  - \frac14
\sim
	\frac{\left(e^{-4w};e^{-4w}\right)_\infty}{2\left(e^{-w};e^{-w}\right)_\infty}
\qquad
(\mbox{as } w\to 0)
.
\end{align*}
Using \eqref{Pochas} gives that $\FTwo(1;-e^{-w})\sim \frac14 e^{\frac{\pi^2}{8w}}$ as $w\to 0$.
We now use Theorem \ref{Taub} with $\lambda=\frac14$, $\alpha=0$, and $A=\frac{\pi^2}{8}$ to obtain the claim.

For the reader concerned about taking the limit $w\rightarrow0$ inside the sums, one can
instead apply (mock) modular transformations to the representation in Corollary \ref{cor:F2}
to obtain the same results. However, these calculations are considerably longer.
\end{proof}

While $\FTwo(\zeta;-q)$ does not appear to posses any quantum modular properties, the
formal dual $\FTwo(\zeta;-q^{-1})$ does. 
Recall that a function $f:\mathcal{Q}\rightarrow\mathbb{C}$, 
($\mathcal{Q}\subset\mathbb{Q}$) is a  
{\it quantum modular form}, of weight $k$ with respect to $\Gamma$, if
{\it the obstruction to modularity},
$$f(\tau) - \chi(M)^{-1}(c \tau + d)^{-k} f \left(\frac{a\tau + b}{c \tau + d}\right), \quad M=\left(\begin{matrix}
a&b\\ c&d
\end{matrix}\right) \in \Gamma,$$
can be extended to an analytic function on an open subset of $\mathbb{R}$.
Quantum modular forms were introduced by Zagier in \cite{Zagier1}. 

Noting that $( w;q^{-1})_{n} = (w^{-1};q)_n(-1)^nw^nq^{-\frac{n(n-1)}{2}}$,
we obtain
\begin{align*}
\FTwo\left(\zeta ;-q^{-1}\right)
&=
	\sum_{n\ge1}\frac{\left(-\zeta q^2,-\zeta^{-1}q^2;q^2\right)_{n-1}(-1)^nq^n}{(q,-q^2;q^2)_{n}}
=
	\frac{\zeta}{1-\zeta^2}
	\sum_{n\ge1}(-1)^nq^{n^2}(\zeta^{-n}-\zeta^n)
,
\end{align*}
where the second equality is Theorem 15 of \cite{AndrewsWarnaar1} with $q\mapsto-q$.
In particular,
\begin{align}\label{false}
\FTwo\left(1;-q^{-1}\right)&= \sum_{n\geq 1} n(-1)^n q^{n^2}.
\end{align}
Using the now standard techniques for false theta functions \cite{BringmannCreutzigRolen1},
one can show that \eqref{false} is a quantum modular form, where the values of the function
on the rationals are given by taking radial limits.
However, since neither the series for $\FTwo(\zeta;-q)$ nor $\FTwo(\zeta;-q^{-1})$ 
truncates if $q$ is a root of unity, the behavior of one function as $q$ approaches a root
of unity says nothing about the other.

\section{The function $\FThree(\zeta ;q)$}

A unimodal sequence is {\it $M_2$-left heavy} 
if the largest part is even, 
all odd parts appear to the left of the peak,
and the subsequence consisting of the even parts is strongly unimodal.
Then $\fthree(n)$ is the number of $M_2$-left heavy unimodal sequences
of size $n$ and $\fthree(m,n)$ is the number of such sequences
where the strongly unimodal sequence consisting of the
even parts has rank $m$.

\begin{example}
We have $\fthree(6)=5$ since the relevant sequences are:
$(6)$, $(2,4)$, $(4,2)$, $(1,1,4)$, and $(1,1,1,1,2)$.
The residual ranks of these sequences are 
$0$, $-1$, $1$, $0$, and $0$, respectively.
\end{example}

The following proposition rewrites $\FThree(\zeta;-q)$ in terms
of generalized Lambert series. We note that a similar expression for $\FThree(\zeta;-q)$ can be found in
\cite[equation (4.29)]{Mortenson1}.
\begin{proposition}\label{prop:R2}
We have
\begin{align*}
&(1+\zeta)\left(1+\zeta^{-1}\right)\FThree(\zeta;-q)
\\&=
	-R2(-\zeta;-q)
	-\frac{\zeta^{-1}\left(-\zeta,-\zeta^{-1};q^2\right)_\infty}{(1+\zeta q)\left(q;q^2\right)_\infty}R\left(-\zeta q;q^2\right)
	+\frac{(q;q)_\infty \left(q;q^2\right)_\infty^2}{\left(-\zeta q,-\zeta^{-1}q;q\right)_\infty}
	+\frac{\zeta^{-1}\left(-\zeta,-\zeta^{-1};q^2\right)_\infty}{\left(q;q^2\right)_\infty}
	.
\end{align*}
\end{proposition}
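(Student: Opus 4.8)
The plan is to mirror the strategy used for Proposition~\ref{prop:F2}, namely to apply Lemma~\ref{L:AndrewsBerndtTheorem621} to reduce $\FThree(\zeta;-q)$ to two simpler $q$-hypergeometric sums, and then to transform each of these into a generalized Lambert series. First I would record that the summands of $\FThree(\zeta;-q)$ have a clean form: since $(-q;q^2)_n^{-1}$ behaves well under $q\mapsto -q$, writing out $\FThree(\zeta;-q)=\sum_{n\ge1}(-\zeta q^2,-\zeta^{-1}q^2;q^2)_{n-1}(-1)^n q^{2n}/(q;q^2)_n$ (after the sign change) puts the series in the shape covered by Lemma~\ref{L:AndrewsBerndtTheorem621} with base $q^2$. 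After a shift $n\mapsto n+1$ to pass from $(-\zeta q^2,\dots)_{n-1}$ to $(-\zeta q^2,\dots)_n$ and to normalize the lower Pochhammer, I would apply Lemma~\ref{L:AndrewsBerndtTheorem621} with $q\mapsto q^2$ and appropriately chosen $a,b,A,B$ (the natural choice being $a$ and $b$ tuned so that $(-aq^2,-bq^2;q^2)_n$ reproduces $(-\zeta q^2,-\zeta^{-1}q^2;q^2)_n$, with $B$ chosen to cancel against the $(q;q^2)$-type denominator, exactly as in the $\FTwo$ computation). This yields one sum of ``$R$-type'' (a partial-theta-free hypergeometric sum convergent near the relevant specialization) plus one sum whose denominator is a product of two shifted Pochhammer symbols — the genuine Lambert-series piece.

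\medskip
\noindent Next I would process the two resulting sums in turn. For the first sum, I expect to apply Heine's transformation (Lemma~\ref{L:Heine}) with $q\mapsto q^2$ to recast it as a series whose summand matches the defining series of one of the known rank generating functions; the presence of $(-q;q^2)_n$ and $q^{n^2}$ factors after the transformation should let me identify it with $R2(-\zeta;-q)$ (up to an infinite-product prefactor), which explains the first term $-R2(-\zeta;-q)$ on the right-hand side. For the second sum I would apply Watson's transformation (Lemma~\ref{L:Watson}) with $q\mapsto q^2$, $a=q^2$, and $b,c,d,e$ chosen so that the very-well-poised $_8\phi_7$ collapses: the standard choice here makes $(bcde)^{-1}(aq)^{2}$ produce the quadratic exponent $q^{2n^2+\cdots}$ and the four numerator/denominator Pochhammer pairs telescope into a single Lambert-series denominator $1+\zeta q^{2n+\cdots}$, with an infinite product of the form $(-\zeta,-\zeta^{-1};q^2)_\infty/((q;q^2)_\infty\cdots)$ out front. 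Comparing the product prefactor to the $(q;q)_\infty(q;q^2)_\infty^2/(-\zeta q,-\zeta^{-1}q;q)_\infty$ and $\zeta^{-1}(-\zeta,-\zeta^{-1};q^2)_\infty/(q;q^2)_\infty$ terms in the claimed identity, I would then split off the polar/constant contributions (the terms that survive at $n$ extreme, or equivalently the pieces coming from the ``partial theta'' part of Lemma~\ref{L:AndrewsBerndtTheorem621}) to produce the two ``constant-like'' summands. A partial-fraction step on $1/((1+\zeta q^{2n+1})(1+\zeta^{-1}q^{2n+1}))$ together with the substitution $n\mapsto -n-1$, exactly as at the end of the proof of Proposition~\ref{prop:F2}, consolidates everything and identifies the $R(-\zeta q;q^2)$ term.

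\medskip
\noindent The main obstacle I anticipate is bookkeeping rather than conceptual: getting the four parameters in Watson's transformation exactly right so that the quadratic exponent in the Lambert series matches and the infinite-product prefactors simplify to precisely the stated eta-type and Pochhammer-type expressions, and correctly tracking the powers of $\zeta$ and $q$ through the $q\mapsto -q$ substitutions (which interact nontrivially with $(a;q)_n$ via $(w;q^{-1})_n = (w^{-1};q)_n(-1)^n w^n q^{-n(n-1)/2}$-type identities and the rewriting $(q;-q)_{2n}^{-1}=(-q;q^2)_n/(q^{2n+2};q^2)_n$). A secondary subtlety is isolating the two non-Lambert terms: Lemma~\ref{L:AndrewsBerndtTheorem621} outputs a partial-theta-like sum plus a genuinely convergent sum, and one must recognize that after the transformations the partial-theta sum either telescopes to the closed-form products displayed or recombines with a boundary term of the Watson sum; keeping careful track of which normalizing factor $\frac{(B,-Abq;q)_\infty}{a(-aq,-bq;q)_\infty}$ attaches to which piece is where an error is most likely to creep in. Once the parameter choices are pinned down, though, the identity follows by a direct (if lengthy) comparison of both sides, and as usual the convergence restrictions in the lemmas can be ignored since the final identity holds by analytic continuation.
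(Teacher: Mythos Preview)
Your plan has a structural mismatch at the very first step. After the shift $n\mapsto n+1$ and the sign change $q\mapsto -q$, the series for $\FThree$ reads
\[
\FThree(\zeta;-q)=\frac{1}{1-q}\sum_{n\ge0}\frac{\left(-\zeta q^2,-\zeta^{-1}q^2;q^2\right)_n q^{2n+2}}{\left(q^3;q^2\right)_n},
\]
i.e.\ two Pochhammer factors upstairs and \emph{one} downstairs. Lemma~\ref{L:AndrewsBerndtTheorem621}, however, treats sums of the shape $\sum_n (B,-Abq;q)_n q^n/(-aq,-bq;q)_n$, with \emph{two} Pochhammer factors in the denominator. Your proposed specialization ``$a,b$ tuned so that $(-aq^2,-bq^2;q^2)_n$ reproduces $(-\zeta q^2,-\zeta^{-1}q^2;q^2)_n$'' would place the $\zeta$-dependent factors in the denominator, the opposite of where they sit in $\FThree$; there is no choice of $B$ that then ``cancels against the $(q;q^2)$-type denominator,'' because $B$ lives in the numerator of the lemma. (The limit $b\to 0$ that would kill one denominator factor also kills $(-Abq;q)_n$ and collapses both right-hand sums to their $n=0$ terms, so it gives nothing useful.) The pattern you should be mirroring is not Proposition~\ref{prop:F2} but Lemma~\ref{LemmaFOneMock1}: there and here the correct opening move is Lemma~\ref{L:Entry6.3.12}, whose left side $\sum_n(-aq,-bq;q)_n q^{n+1}/(-cq;q)_n$ has exactly the two-over-one Pochhammer shape. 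The paper applies it with $q\mapsto q^2$, $a=\zeta$, $b=\zeta^{-1}$, $c=-q$.

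After that step, the paper's route also diverges from your Heine/Watson plan. Lemma~\ref{L:Entry6.3.12} already outputs two sums with quadratic exponents $q^{n^2}$ and $q^{2n(n-1)}$, so no Watson-type quadratic collapse is needed; instead the paper quotes identities external to Section~2 (entry~12.3.2 and equation~(12.2.5) of \cite{AndrewsBerndt1}, together with Lemma~7.9 of \cite{Garvan1}) to recognize the two sums as $R2(-\zeta;-q)$ plus a product and $R(-\zeta q;q^2)$ plus a constant, respectively. No partial-fraction step of the $\FTwo$ type is required. Your approach is not obviously unsalvageable, but as written the first lemma does not apply, and the downstream Heine/Watson manipulations are aimed at producing Lambert series rather than the rank generating functions that actually appear in the statement.
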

\begin{proof}
We use Lemma \ref{L:Entry6.3.12} with $q\mapsto q^2$, $a=\zeta$, $b=\zeta^{-1}$, and $c=-q$, which gives that
\begin{align*}
\FThree(\zeta;-q)
&=
\sum_{n\ge1}\frac{\left(q;q^2\right)_{n-1}(-1)^nq^{n^2}}{\left(-\zeta q,-\zeta^{-1}q;q^2\right)_n}
+
\frac{\left(-\zeta q^2,-\zeta^{-1}q^2;q^2\right)_\infty}{\left(q;q^2\right)_\infty}
\sum_{n\ge1}\frac{q^{2n(n-1)+1}}{\left(-\zeta q,-\zeta^{-1}q;q^2\right)_n}
.
\end{align*}
By entry 12.3.2 of \cite{AndrewsBerndt1}, we obtain that 
\begin{align*}
\sum_{n\ge1}\frac{(-1)^n\left(q;q^2\right)_{n-1}q^{n^2}}{\left(-\zeta q,-\zeta^{-1}q;q^2\right)_n}
&=
-\frac{R2(-\zeta;-q)}{(1+\zeta)\left(1+\zeta^{-1}\right)}
+
\frac{\left(q;q^2\right)_\infty (q;q)_\infty }{\left(-\zeta,-\zeta^{-1},-q;q\right)_\infty}
.
\end{align*}
Using equation (12.2.5) of \cite{AndrewsBerndt1} and  Lemma 7.9 of \cite{Garvan1} yields 
\begin{align*}
\sum_{n\ge1}\frac{q^{2n(n-1)+1}}{\left(-\zeta q,-\zeta^{-1}q;q^2\right)_n}  =
\zeta^{-1}
-\frac{\zeta^{-1}}{1+\zeta q}R\left(-\zeta q;q^2\right),
\end{align*}
giving the claim.
\end{proof}

In the following corollary, 
we rewrite $\FThree(\zeta;-q)$ in terms
of known (mock) modular objects.
\begin{corollary}\label{CorollaryF3MockModular}
We have
\begin{align*}
\FThree(\zeta;-q)
&=
	\frac{q^{\frac18} \eta(\tau) A_2\left(z+\frac12,-\tau;2\tau \right)}{(1+\zeta) \eta(2\tau)^2}
	+
	\frac{i\zeta^{-2}q^{-\frac{13}{8}} \vartheta\left(z+\frac12;2\tau\right) 
		A_3\left(z+\tau+\frac12, -2\tau; 2\tau \right)}
	{(1+\zeta)\eta(\tau)\eta(2\tau)}
	\\&\quad
	-
	\frac{\zeta^{\frac12}q^{\frac{1}{8}} \eta(\tau)^4 }
	{( 1+\zeta) \eta(2\tau)^2 \vartheta\left(z+\frac12;\tau\right)}
	-
	\frac{\zeta^{-\frac12}q^{-\frac{5}{24}} \vartheta\left(z+\frac12;2\tau\right)}
	{(1+\zeta)\eta(\tau)}
.
\end{align*}
\end{corollary}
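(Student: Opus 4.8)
The plan is to obtain the stated formula from Proposition \ref{prop:R2}, which already expresses $(1+\zeta)\left(1+\zeta^{-1}\right)\FThree(\zeta;-q)$ as a sum of four pieces: the Dyson rank $R(-\zeta q;q^2)$, the $M_2$-rank $R2(-\zeta;-q)$, and two infinite products. I would rewrite each of the four pieces in terms of $\eta$, $\vartheta$, and the Appell functions $A_2$, $A_3$ from Section \ref{SectionPrelimAnalysis}, then divide by $(1+\zeta)\left(1+\zeta^{-1}\right)=\zeta^{-1}(1+\zeta)^2$ and collect.

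First I would record the Hecke/Appell-type representations of the two rank generating functions, namely the classical identity
\[
R(\zeta;q)=\frac{1-\zeta}{(q;q)_\infty}\sum_{n\in\Z}\frac{(-1)^nq^{\frac{3n^2+n}{2}}}{1-\zeta q^n}
\]
and the analogue for the $M_2$-rank,
\[
R2(\zeta;q)=\frac{(1-\zeta)\left(-q;q^2\right)_\infty}{\left(q^2;q^2\right)_\infty}\sum_{n\in\Z}\frac{(-1)^nq^{2n^2+n}}{1-\zeta q^{2n}},
\]
the latter being available in the literature on the $M_2$-rank (see, e.g., \cite{AndrewsBerndt1}); alternatively both follow from Lemma \ref{L:BaileysLemma} combined with Lemma \ref{LemmaLovejoy}. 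Writing $\tfrac{3n^2+n}{2}=\tfrac{3n(n+1)}{2}-n$ and $2n^2+n=2(n^2+n)-n$, the two Lambert series become, up to elementary prefactors, the Appell functions with the second argument chosen to absorb the extra linear-in-$n$ power of $q$.

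Next I would substitute into Proposition \ref{prop:R2}. The Dyson rank there occurs as $R(-\zeta q;q^2)$, i.e.\ with nome $q^2$ (so $\tau\mapsto 2\tau$) and argument $-\zeta q=e^{2\pi i\left(z+\tau+\frac12\right)}$; crucially the factor $1-(-\zeta q)=1+\zeta q$ coming from the $R$-identity cancels the denominator $(1+\zeta q)$ of that term, so that, after also converting the coefficient $\left(-\zeta,-\zeta^{-1};q^2\right)_\infty$ via the product formula for $\vartheta$, one is left with exactly the $A_3\!\left(z+\tau+\tfrac12,-2\tau;2\tau\right)$ contribution. For $R2(-\zeta;-q)$ one first makes the substitution $q\mapsto-q$, $\zeta\mapsto-\zeta$ in the $R2$-identity; since $(-q)^{2n^2+n}=(-1)^nq^{2n^2+n}$, the two alternating signs cancel and the remaining series is recognized as $A_2\!\left(z+\tfrac12,-\tau;2\tau\right)$. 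Finally, the two infinite products are converted directly: using $\vartheta(z;\tau)=-iq^{\frac18}\zeta^{-\frac12}\left(q,\zeta,\zeta^{-1}q;q\right)_\infty$ (with $z\mapsto z+\tfrac12$ to introduce the sign) and $\eta(\tau)=q^{\frac1{24}}(q;q)_\infty$, $\eta(2\tau)=q^{\frac1{12}}\left(q^2;q^2\right)_\infty$, one identifies $\dfrac{(q;q)_\infty\left(q;q^2\right)_\infty^2}{\left(-\zeta q,-\zeta^{-1}q;q\right)_\infty}$ with the $\dfrac{\eta(\tau)^4}{\eta(2\tau)^2\,\vartheta\!\left(z+\frac12;\tau\right)}$ term and $\dfrac{\zeta^{-1}\left(-\zeta,-\zeta^{-1};q^2\right)_\infty}{\left(q;q^2\right)_\infty}$ with the $\dfrac{\vartheta\!\left(z+\frac12;2\tau\right)}{\eta(\tau)}$ term. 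Assembling all four converted pieces, dividing by $\zeta^{-1}(1+\zeta)^2$, and simplifying the powers of $q$ and $\zeta$ gives the stated identity.

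The bulk of this is mechanical, so the main obstacle is bookkeeping. One must fix and use consistently the branches of the half-integer powers $\zeta^{\frac12}$, $(-\zeta)^{\frac12}$ that enter both through $\vartheta$ and through the Appell prefactor $(-\zeta q)^{-\frac32}$, and one must carefully track the many fractional powers of $q$: the final exponents $\frac18$, $-\frac{13}8$, $\frac18$, $-\frac5{24}$ only emerge after the interplay of the substitutions $q\mapsto q^2$ and $q\mapsto-q$ with the $q$-powers already built into $\vartheta$ and $\eta$. A secondary, more concrete point is to invoke (or establish, via Lemmas \ref{L:BaileysLemma} and \ref{LemmaLovejoy}) the correct Appell-sum representation of the less standard $M_2$-rank $R2$ and to carry out the $q\mapsto-q$ substitution there without sign errors.
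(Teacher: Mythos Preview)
Your plan is correct and is exactly the (implicit) argument of the paper: the corollary is stated without proof because it follows from Proposition~\ref{prop:R2} by inserting the standard Appell--Lerch representations of $R$ and $R2$ and rewriting the two infinite products via the product formulas for $\vartheta$ and $\eta$. The bookkeeping you flag (branches of $\zeta^{1/2}$, the $q\mapsto q^2$ and $q\mapsto-q$ substitutions) is indeed the only content.
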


Next we prove the asymptotic formula for $\fthree(n)$. We note that
$\fthree(n+1)\ge\fthree(n)$ follows trivially by taking a sequence
counted by $\fthree(n)$ and adding a single $1$ to the left
of the peak. Furthermore, this also shows that 
$\fthree(m,n+1)\ge\fthree(m,n)$ for fixed $m$.
\begin{proof}[Proof of the asymptotics for $\fthree(n)$ in Theorem \ref{TheoremAsymptotics}]
By using Proposition \ref{prop:R2}, we have
\begin{align*}
\FThree(1;-q) 
= 
	-\frac14 R2(-1;-q) - \frac{\left(-1;q^2\right)_\infty^2}{4(1+q)\left(q;q^2\right)_\infty} R\left(-q;q^2\right) 
	+ \frac{(q;q)_\infty \left(q;q^2\right)_\infty^2}{4(-q;q)_\infty^2} 
	+ \frac{\left(-1;q^2\right)_\infty^2}{4\left(q;q^2\right)_\infty}.
\end{align*}
With $q=e^{-w}$, $w\rightarrow0$, and $|\!\Arg(w)|<\theta$, 
we find that
\begin{align*}
\frac{\left(-1;q^2\right)_\infty^2}{\left(q;q^2\right)_\infty}
&= 
	4 \frac{\left(q^4;q^4\right)^2_\infty}{(q;q)_\infty \left(q^2;q^2\right)_\infty}
	\sim 
	\sqrt{2} e^{\frac{\pi^2}{6w}}
,\qquad
\frac{(q;q)_\infty \left(q;q^2\right)_\infty^2}{(-q;q)_\infty^2} 	
= 
	\frac{(q;q)_\infty^5}{\left(q^2;q^2\right)_\infty^4} 
	\sim 
	4\sqrt{\frac{2\pi}{w}} e^{-\frac{\pi^2}{2w}}.
\end{align*}
Moreover,
\begin{align*}
\lim_{w\rightarrow0}
R2(-1;-e^{-w})
&=
	\lim_{q\rightarrow1}
 	\sum_{n\geq 0 } \frac{\left(q;q^2\right)_n (-1)^n q^{n^2}}{\left(-q^2;q^2\right)_n^2} 
 	= 1
,\\
\lim_{w\rightarrow0}
R\left(-q;q^2\right) 
&= 
	\lim_{q\rightarrow1}
	\sum_{n\geq 0} \frac{q^{2n^2}}{\left(-q^3,-q;q^2\right)_n} 
	= 
	\sum_{n\geq 0} \left(\frac14\right)^n 
	= \frac43.
\end{align*}	
Thus
\begin{align*}
\FThree(1;-q) 
\sim 
-\frac14+\frac{\left(-1;q^2\right)_\infty^2}{4\left(q;q^2\right)_\infty} \left(-\frac12 R\left(-q;q^2\right) + 1\right) 
\sim  
\frac{1}{6\sqrt{2}} e^{\frac{\pi^2}{6w}}.
\end{align*}	
We now use Theorem \ref{Taub} with $\lambda=\frac{1}{6\sqrt{2}}$, $\alpha=0$, and 
$A=\frac{\pi^2}{6}$	to obtain the claim. 

Again for the reader concerned with
taking limits inside sums, one may instead use modular transformations with the 
representation in Corollary \ref{CorollaryF3MockModular}. While one can save some 
effort by noting $R(-q;q^2)= 1+q-q(1+q)\omega(-q)$, where $\omega(q)$ is a third
order mock theta function, these calculations are still somewhat lengthy.
\end{proof}

A first step to prove Theorem \ref{thm:f3odd} is to rewrite $\FThree(1;q)$ modulo 2.

\begin{proposition}\label{PropositionF3Mod2}
We have that 
\begin{align*}
\FThree(1;-q)
&\equiv
	\sum_{\substack{n\ge0\\0\le j\le n}}
	\left(1+q^{2j+1}\right) q^{3n^2+6n-2j^2-3j+2} 
\pmod{2}.
\end{align*}
\end{proposition}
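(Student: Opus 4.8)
The plan is to start from the definition
$\FThree(1;-q)=\sum_{n\ge1}\frac{(q^2;q^2)_{n-1}\,q^{2n}}{(-q;q^2)_n}$ and reduce modulo $2$. Working mod $2$ we may replace every sign by a plus, so $(q^2;q^2)_{n-1}\equiv(-q^2;q^2)_{n-1}$ and $(-q;q^2)_n\equiv(q;q^2)_n$, and the Pochhammer denominator can be expanded as a geometric-type product: $\frac{1}{(-q;q^2)_n}\equiv\frac{1}{(q;q^2)_n}=\prod_{j=0}^{n-1}\frac{1}{1-q^{2j+1}}\equiv\prod_{j=0}^{n-1}(1+q^{2j+1}+q^{2(2j+1)}+\cdots)\pmod 2$. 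The cleaner route, however, is to recognize $(q^2;q^2)_{n-1}\equiv(q;q)_{\text{something}}$ type simplifications using the classical congruence $(q;q)_\infty\equiv\sum(-1)^j q^{j(3j\pm1)/2}$ is overkill here; instead I would use the finite identities already available. Specifically, I would first massage $\FThree(1;-q)$ into a shape where one of Lemma~\ref{L:Entry6.3.12} (as in the proof of Proposition~\ref{prop:R2}) or a direct $q$-binomial manipulation produces a double sum, then reduce that double sum mod $2$.

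Concretely, I would take the expression from the proof of Proposition~\ref{prop:R2},
\[
\FThree(1;-q)=\sum_{n\ge1}\frac{\left(q;q^2\right)_{n-1}(-1)^nq^{n^2}}{\left(-q,-q;q^2\right)_n}
+\frac{\left(-q^2;q^2\right)_\infty^2}{\left(q;q^2\right)_\infty}\sum_{n\ge1}\frac{q^{2n(n-1)+1}}{\left(-q,-q;q^2\right)_n},
\]
and reduce each piece mod $2$. In the first sum $(-q,-q;q^2)_n=(-q;q^2)_n^2\equiv(q;q^2)_n^2\equiv(q^2;q^4)_n\pmod 2$ (since $a^2\equiv a$ on coefficient level is false, but $(1-q^{2j+1})^2\equiv 1-q^{2(2j+1)}$ mod $2$), so that piece becomes $\sum_{n\ge1}\frac{(q;q^2)_{n-1}\,q^{n^2}}{(q^2;q^4)_n}\pmod 2$; similarly $(-q^2;q^2)_\infty^2\equiv(q^4;q^4)_\infty$ and $(q;q^2)_\infty$ stays, and the infinite prefactor simplifies. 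I would then expand the remaining single Pochhammer denominators as geometric series and collect the exponent. The target exponent $3n^2+6n-2j^2-3j+2$ with the factor $(1+q^{2j+1})$ strongly suggests that after all simplifications one lands on a sum over $n$ of a \emph{finite} inner sum over $0\le j\le n$, i.e.\ a theta-like double sum; matching $3n^2+6n$ to $3(n+1)^2-3$ and $2j^2+3j$ to $\tfrac12(2j+1)(2j+2)-1$ hints that the right intermediate object is something like $\sum_{n}q^{3(n+1)^2}\sum_{j=0}^{n}\bigl(q^{-(2j+1)(2j+2)/2}+q^{-(2j+1)(2j)/2}\bigr)$ up to an overall power of $q$, which one gets from Lemma~\ref{L:Entry6.3.12} or from a Bailey-type identity with $a=1$.

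The main obstacle I expect is bookkeeping the exponents and the range of summation through the mod-$2$ reductions — in particular, ensuring that the ``$(1+q^{2j+1})$'' factor emerges correctly (it should come from pairing the $j$ and $-j-1$ contributions, or from the two numerator terms in an Andrews--Berndt entry after the $q\mapsto q^2$ substitution), and verifying that no extra constant term or boundary term survives mod $2$. A secondary subtlety is that several of the infinite-product prefactors individually do \emph{not} have integer power-series coefficients after naive mod-$2$ reduction, so I must keep them grouped (e.g.\ $(q;q)_\infty(q;q^2)_\infty^2/(-\zeta q,-\zeta^{-1}q;q)_\infty$ at $\zeta=1$ equals $(q;q)_\infty^5/(q^2;q^2)_\infty^4$, which is a genuine series) until the reduction is applied to an honest integer series. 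Once the double sum is obtained, I would finish by a short direct check that the displayed congruence matches, e.g.\ comparing low-order coefficients against the definition to catch any off-by-one in the shift $n\mapsto n-1$ or in the power of $q$.
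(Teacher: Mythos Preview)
Your proposal is a sketch of several possible routes, none of which you actually carry out, and the one you favor has a genuine gap. The paper's proof does \emph{not} go through Proposition~\ref{prop:R2} at all. Instead it uses the Bailey machinery directly: from Lemma~\ref{LemmaLovejoy} with $q\mapsto q^2$, $a=q^4$, $b=q$, $d=c^2$, $c\to 0$ one obtains a Bailey pair relative to $(q^4,q^2)$ with $\beta_n=1/(q^3;q^2)_n$, and Bailey's Lemma (Lemma~\ref{L:BaileysLemma}) with $\varrho_1=\varrho_2=q^2$ then yields the \emph{exact} identity
\[
\sum_{n\ge0}\frac{(q^2;q^2)_n^2\,q^{2n}}{(q^3;q^2)_n}
=(1-q)\sum_{\substack{n\ge0\\0\le j\le n}}\frac{(-1)^n\bigl(1+q^{2n+2}\bigr)\bigl(1+q^{2j+1}\bigr)\,q^{3n^2+6n-2j^2-3j}}{1-q^{2n+2}}.
\]
After shifting $n\mapsto n+1$ in the definition of $\FThree(1;-q)$ and using $(-q^2;q^2)_n^2\equiv(q^2;q^2)_n^2\pmod 2$, the left side is $\frac{1-q}{q^2}\FThree(1;-q)$ modulo $2$; on the right, $(-1)^n\equiv1$ and $(1+q^{2n+2})/(1-q^{2n+2})\equiv1$, and the claim drops out.

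The obstacle in your preferred route is the one you name but do not resolve: starting from the representation in Proposition~\ref{prop:R2}, after all your mod-$2$ simplifications of the Pochhammers you are still left with \emph{infinite} unilateral sums (or their Appell--Lerch avatars), and you give no mechanism by which a \emph{finite} inner sum over $0\le j\le n$ would emerge. That truncation is exactly what the Bailey pair supplies for free, since the finite $j$-sum is already present in the $\alpha_n$ coming from Lemma~\ref{LemmaLovejoy}. You do mention ``a Bailey-type identity with $a=1$'' in passing, but that passing remark is in fact the whole proof, and everything else in your outline is a detour. A minor separate slip: your opening formula for $\FThree(1;-q)$ should have numerator $(-q^2;q^2)_{n-1}^2$ (squared) and denominator $(q;q^2)_n$ after $q\mapsto -q$; the sign is harmless mod $2$, but the missing square is not.
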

\begin{proof}
By taking $q\mapsto q^2$, $a=q^4$, $b=q$, $d=c^2$, and then letting
$c\rightarrow 0$ 
in Lemma \ref{LemmaLovejoy}
we have the following Bailey
pair relative to $(q^4,q^2)$,
\begin{align*}
\alpha_n
=
	\frac{(-1)^n \left(1-q^{4n+4}\right) q^{3n^2+4n}(1-q) }{\left(1-q^2\right)\left(1-q^4\right)}
	\sum_{0\leq j \leq n} \left(1+q^{2j+1}\right)q^{-2j^2-3j} 
,\qquad \beta_n = \frac{1}{\left(q^3;q^2\right)_n}.
\end{align*}
We then apply Lemma \ref{L:BaileysLemma}, with $\varrho_1=\varrho_2=q^2$, to this
Bailey pair to obtain 
\begin{align}\label{Thetid}
\sum_{n\ge0}\frac{\left(q^2;q^2\right)_n^2 q^{2n} }{\left(q^3;q^2\right)_n}=
	(1-q)\sum_{\substack{n\geq 0\\ 0\le j\le n}}
	\frac{ (-1)^n \left(1+q^{2n+2}\right) \left(1+q^{2j+1}\right) q^{3n^2+6n-2j^2-3j} }{1-q^{2n+2}}
.	
\end{align}
Thus, changing $n \mapsto n+1$ in the definition of $U2$, and then using \eqref{Thetid}, we obtain the claim. 
\end{proof}

To relate the parity of $\fthree(n)$ to norms of ideals in $\mathbb Q(\sqrt{6})$, 
we rewrite the sum in Proposition \ref{PropositionF3Mod2}.

\begin{proposition}\label{PropositionF32}
We have that
\begin{align*}
\sum_{\substack{n\ge0\\0\le j\le n}}
\left(1+q^{2j+1}\right) q^{3n^2+6n-2j^2-3j+2} 
&=
	\frac{1}{2}\sum_{\substack{N\ge3\\ N\equiv 2\!\!\!\pmod{4}}}
	\sum_{\substack{-\frac{N}{3}<J\le \frac{N}{3}\\J\equiv 1\!\!\!\pmod{2}}}
	q^{\frac{N^2-6J^2}{16}+\frac18}
.
\end{align*}
\end{proposition}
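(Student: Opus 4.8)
The plan is to set up a bijective change of summation index that converts the $(n,j)$-sum on the left into the $(N,J)$-sum on the right. First I would examine the exponent $3n^2+6n-2j^2-3j+2$ appearing in the summand. Completing the square in each variable, one has $3n^2+6n = 3(n+1)^2-3$ and $2j^2+3j = \tfrac18\bigl((4j+3)^2-9\bigr)$, so the exponent becomes $3(n+1)^2 - \tfrac18(4j+3)^2 + 2 - 3 + \tfrac98 = 3(n+1)^2 - \tfrac18(4j+3)^2 + \tfrac18$. Thus the natural new variables are (up to sign) $N$ proportional to $n+1$ and $J$ proportional to $4j+3$; matching the target exponent $\tfrac{N^2-6J^2}{16}+\tfrac18$ forces, after clearing the common factor, the substitution $N = 4(n+1)$ is too coarse — instead one wants $3(n+1)^2 = \tfrac{N^2}{16}$, i.e.\ $N = 4\sqrt{3}\,(n+1)$, which is not integral, so the correct reading is that the two squares on the right, $N^2/16$ and $6J^2/16$, must be reorganised: write $N^2 - 6J^2 = 16\cdot 3(n+1)^2 - 16\cdot\tfrac18(4j+3)^2$ only after one also absorbs the factor-of-$2$ asymmetry. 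The cleanest route is to treat the two terms $1$ and $q^{2j+1}$ in the factor $(1+q^{2j+1})$ separately: the ``$1$'' term contributes $q^{3(n+1)^2 - \frac18(4j+3)^2 + \frac18}$ and the ``$q^{2j+1}$'' term contributes $q^{3(n+1)^2 - \frac18(4j+1)^2 + \frac18}$ (since $2j^2+3j-(2j+1) = 2j^2+j = \tfrac18((4j+1)^2-1)$, giving exponent $3(n+1)^2-\tfrac18(4j+1)^2+\tfrac18$). So together the left side equals $\sum_{n\ge 0}\sum_{0\le j\le n} q^{3(n+1)^2+\frac18}\bigl(q^{-\frac18(4j+3)^2}+q^{-\frac18(4j+1)^2}\bigr)$.

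Next I would unify the two inner contributions. As $j$ ranges over $0\le j\le n$, the integer $4j+3$ runs over $3,7,11,\dots,4n+3$ and $4j+1$ runs over $1,5,9,\dots,4n+1$; together these are exactly the odd integers in $[1,4n+3]$ not divisible by... no — they are \emph{all} odd integers $J$ with $1\le J\le 4n+3$ that are $\equiv 1$ or $3\pmod 4$, i.e.\ all odd $J$ in that range. But we also want to symmetrise: by $J\mapsto -J$ the target sum $\sum_{-N/3<J\le N/3,\ J\text{ odd}}$ is (almost) symmetric. So I would instead let $j$ and the sign vary together: replacing the range $0\le j\le n$ of a single term by letting $J=\pm(4j+3)$ or $J=\pm(4j+1)$ ranges over all odd integers with $|J|\le 4n+3$ (the value $J$ and $-J$ each hit once, and $0$ is excluded since $J$ is odd). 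Setting $N = 2(2n+2) = 4n+4$, one checks $4n+3 < \tfrac{N}{1}$, and the constraint $|J| < \tfrac{N}{3}$ should match: we need $4n+3$ versus $\tfrac{4n+4}{3}$ — these do \emph{not} agree, so the index $N$ must instead be chosen so that $\tfrac{N}{3}$ is the right cutoff, namely $N = 3(4n+3)+\varepsilon$; but then $N\equiv 2\pmod 4$ must hold. Here is where the real bookkeeping lies, and I expect \textbf{matching the two inequality ranges $0\le j\le n$ against $-N/3 < J\le N/3$ together with the congruence $N\equiv 2\pmod 4$} to be the main obstacle: one must verify that as $n$ runs over $\mathbb{N}_0$ and $j$ over $0\le j\le n$, the resulting pairs $(N,J)$ with $N = 4(n+1)$... — more precisely I would guess $N$ is forced by $3(n+1)^2 = \tfrac{N^2 - 6J^2 + \text{(const)}}{16}$ is the wrong split and that actually the identity $N^2 - 6J^2 = \bigl(2N'\bigr)^2\cdot 3 - \bigl(\text{odd}\bigr)^2\cdot\tfrac{?}{}$ must be read as a genuine norm form, matching $N = 4(n+1)$ against $J\in\{4j+1,4j+3\}$ through $48(n+1)^2 - 6J^2 = (N')^2 - 6J^2$ with $N' = 4\sqrt{3}(n+1)$ — which tells me the correct statement is that the right side genuinely re-sums over the norm form $N^2-6J^2$ and the ``$\frac12$'' accounts for the $\pm J$ double count.

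Concretely, then, the proof is: (i) split $(1+q^{2j+1})$ and complete the square to write the LHS as $\sum_{n\ge 0}q^{3(n+1)^2+\frac18}\sum_{\pm,\ \delta\in\{1,3\}}\tfrac12\, q^{-\frac18(4j+\delta)^2}$ after symmetrising over the sign (the $\tfrac12$ compensating); (ii) identify $\{(n,j,\pm,\delta)\}$ with $\{(N,J): N\equiv 2\pmod 4,\ N\ge 3,\ J\text{ odd},\ -N/3<J\le N/3\}$ via $N = 4n+4$ — wait, this needs $N\equiv 0$, so via $N = $ (some odd multiple of $2$), the precise map being pinned down by equating exponents $3(n+1)^2-\tfrac18(4j+\delta)^2+\tfrac18 = \tfrac{N^2-6J^2}{16}+\tfrac18$, i.e.\ $48(n+1)^2 - 2(4j+\delta)^2 = N^2 - 6J^2$, which factors as $N = 4(n+1)$ is wrong and instead suggests $N$ and $J$ are \emph{coupled}: I would solve $N^2 - 6J^2 = 48(n+1)^2 - 2(4j+\delta)^2$ by taking $N = 2(2n+2-?)$... and (iii) verify the index ranges and the $\pmod 4$ condition correspond exactly, with no pair counted twice except the deliberate $\pm J$ pairing killed by the $\tfrac12$. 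I would carry out steps (i)–(iii) in that order; the completing-the-square step (i) is routine, step (ii) — writing down the explicit bijection $(n,j,\delta,\pm)\leftrightarrow(N,J)$ — is the crux, and step (iii) is a finite-range verification. The arithmetic significance, as the surrounding text indicates, is that $N^2 - 6J^2$ is (up to constants) the norm form of $\mathbb{Q}(\sqrt 6)$, which is what feeds into the eventual proof of Theorem \ref{thm:f3odd}.
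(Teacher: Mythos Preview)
Your completion of squares is correct, but the approach stalls for a structural reason: after step (i) you need integers $N,J$ with
\[
N^2 - 6J^2 \;=\; 48(n+1)^2 - 2(4j+\delta)^2,
\]
and you are looking for $N$ depending only on $n$ and $J$ depending only on $j$. No such pair exists, because the coefficient ratio $48:2$ on the right is $24:1$, not $1:6$; equivalently, $48$ is not a perfect square, so ``$N=\sqrt{48}\,(n+1)$'' is forced and fails. Your repeated near-misses ($N=4(n+1)$ giving $N\equiv 0\pmod 4$, etc.) are all symptoms of this: the quadratic form $3n^2-2j^2+\cdots$ is simply not $\mathrm{GL}_2(\mathbb Z)$-equivalent to $N^2-6J^2$ by a \emph{diagonal} change of variables.

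What the paper does---and what you are missing---is a \emph{shear} before completing the square. Substituting $n\mapsto n+j$ (and then swapping the names $n\leftrightarrow j$) in the left-hand side, followed by the shifts $n\mapsto n\mp 1$ on the two pieces coming from $1$ and $q^{2j+1}$, collapses everything to the single sum
\[
\sum_{n\ge 1}\;\sum_{-\frac{n}{3}\le j\le \frac{n}{3}} q^{\,n^2+n-6j^2-3j}.
\]
Now the coefficients on $n^2$ and $j^2$ are $1$ and $-6$, and one checks directly that $16(n^2+n-6j^2-3j)+2=(4n+2)^2-6(4j+1)^2$. So the correct dictionary is $N=4n+2$ (which \emph{is} $\equiv 2\pmod 4$) and $J=4j\pm 1$; the factor $\tfrac12$ on the right then comes from the two choices of sign in $J$, and the range check $-N/3<J\le N/3$ is a short case analysis. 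The missing idea is precisely this mixing of $n$ and $j$; once you do it, your steps (ii)--(iii) go through cleanly.
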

\begin{proof}
Letting $n\mapsto n+j$, and then swapping $n$ and $j$,
we rewrite the left-hand side as
\begin{equation*}
\sum_{n,j\ge0} q^{n^2+3n+3j^2+6j+6jn+2}
+
\sum_{n,j\ge0} q^{n^2+5n+3j^2+6j+6jn+3} = \sum_{\substack{n\geq 1 \\ -\frac{n}{3}\le j\le\frac{n}{3}}} q^{n^2+n-6j^2-3j}
.
\end{equation*}
For the last step, we let $n\mapsto n-1$ in the first double sum and in the second we let $n\mapsto n+1$.

To finish the claim, we have to prove that
\begin{align}\label{E:F3Parity2}
2\sum_{\substack{ n\geq 1 \\ -\frac{n}{3}\le j\le \frac{n}{3}}} q^{(4n+2)^2-6(4j+1)^2} 
&=\sum_{\substack{N\ge3\\ N\equiv 2\!\!\!\pmod{4}}}
	\sum_{\substack{-\frac{N}{3}<J\le \frac{N}{3}\\J\equiv 1\!\!\!\pmod{2}}}
	q^{N^2-6J^2}
.
\end{align}
Substituting $N=4n+2$ and $J=4j\pm1$, we find that the right-hand side equals
\begin{multline*}
	\sum_{\substack{n\geq 1 \\ -\frac{4n+5}{12}< j\le \frac{4n-1}{12}}}  q^{(4n+2)^2-6(4j+1)^2} 
	+
	\sum_{\substack{ n\geq 1 \\ -\frac{4n-1}{12}< j\le \frac{4n+5}{12}}} q^{(4n+2)^2-6(4j-1)^2} \\
\begin{matrix}
=\displaystyle
2\sum_{n\ge1}
\vphantom{\displaystyle
		\sum_{-\frac{n}{3}\le j\le \frac{n}{3}}
		+\sum_{-\frac{n}{3}-\frac{5}{12}< j< -\frac{n}{3}}
		-\sum_{\frac{n}{3}-\frac{1}{12}< j\le \frac{n}{3}}
}
\end{matrix}
\begin{pmatrix}\displaystyle
		\sum_{-\frac{n}{3}\le j\le \frac{n}{3}}
		+
		\sum_{-\frac{n}{3}-\frac{5}{12}< j< -\frac{n}{3}}
		-
		\sum_{\frac{n}{3}-\frac{1}{12}< j\le \frac{n}{3}} 
\end{pmatrix}
\begin{matrix}
\displaystyle
q^{(4n+2)^2-6(4j+1)^2} 
\vspace{10pt}
\end{matrix}
,
\end{multline*}
letting $j\mapsto -j$ in the second sum of the left-hand side.
From this it is not hard to prove \eqref{E:F3Parity2}.
\end{proof}

We are now ready to prove Theorem \ref{thm:f3odd}.

\begin{proof}[Proof of Theorem \ref{thm:f3odd}]
The proof requires a small amount of standard algebraic
number theory. For the reader not familiar with the definitions, we offer
\cite[Chapter 11]{Baker1} and \cite{Marcus1} as two references.
By Propositions \ref{PropositionF3Mod2} and \ref{PropositionF32},
\begin{align*}
q^{-2}\FThree\left(1;-q^{16}\right)
&\equiv
	\frac{1}{2}\sum_{\substack{N\ge3\\ N\equiv 2\!\!\!\pmod{4}}}
	\sum_{\substack{-\frac{N}{3}<J\le \frac{N}{3}\\J\equiv 1\!\!\!\pmod{2}}}
	q^{N^2-6J^2}
\pmod{2}.
\end{align*}
We note that $N\equiv 2\pmod{4}$ and $J\equiv 1\pmod{2}$ if and only if
$N^2-6J^2\equiv 6\pmod{8}$.

The case $D=6$ of \cite[Lemma 3]{AndrewsDysonHickerson1} 
states that each equivalence class of solutions to
$u^2-6v^2=m$, with $m$ positive, contains a unique $(u,v)$ such that
$u>0$ and $-\frac{u}{3}<v\le \frac{u}{3}$. Recall that two solutions
$(u,v)$ and $(u',v')$ are \textit{equivalent} if
${u'+v'\sqrt{6}}={\pm(5+2\sqrt{6})^r(u+v\sqrt{6})}$ with $r\in\Z$.
As such, two solutions $(u,v)$ and $(u',v')$ are equivalent exactly
if $u+v\sqrt{6}$ and $u'+v'\sqrt{6}$ generate the same ideal in
$\mathcal{O}_K$, where $K:=\mathbb{Q}(\sqrt{6})$. Since $\mathcal{O}_K$ is a principal ideal domain, we see that
\begin{align*}	
\sum_{\substack{N\ge3\\ N\equiv 2\!\!\!\pmod{4}}}
\sum_{\substack{-\frac{N}{3}<J\le \frac{N}{3}\\J\equiv 1\!\!\!\pmod{2}}}
q^{N^2-6J^2}
&=
\sum_{\substack{\mathfrak{a}\subseteq\mathcal{O}_K\\ N(\mathfrak{a})>0\\ N(\mathfrak{a})\equiv6\!\!\!\pmod{8}}}
q^{N(\mathfrak{a})}
.
\end{align*}

Let $a(m)$ denote the number of ideals of $\mathcal{O}_K$ of norm $m$.
A formula for $a(m)$ can be determined by standard methods 
and for our choice of $K$ it is given in the proof of Theorem 1.3 of 
\cite{Lovejoy2}. In particular, suppose that $m$ is positive and 
$m=2^a3^bp_1^{e_1}\dotsm p_j^{e_j}r_1^{f_1}\dotsm r_k^{f_k}s_1^{g_1}\dotsm s_\ell^{g_\ell}$,
where the $p_t$, $r_t$, and $s_t$ are distinct primes with
$p_t\equiv \pm7,\pm11\pmod{24}$,
$r_t\equiv 1,19\pmod{24}$,
and
$s_t\equiv 5,23\pmod{24}$.
Then 
\begin{align*}
a(m)
&=
\begin{cases}
	0 & \mbox{if any $e_t$ is odd or }a+{\displaystyle\sum_{1\le t\le \ell}} g_t\mbox{ is odd},\\[-2ex]
	(f_1+1)\dotsm(f_k+1)(g_1+1)\dotsm(g_\ell+1) & \mbox{otherwise}.
\end{cases}
\end{align*}
It is not hard to see that the parity of $\fthree(n)$ is as claimed,
since $\fthree(n)\equiv \frac{1}{2}Q(16n-2) \pmod{2}$.
\end{proof}

\section{Concluding Remarks}
This paper introduces and proves various properties of the functions 
$\FOne(\zeta;q)$, $\FTwo(\zeta;q)$, and $\FThree(\zeta;q)$.
It is not difficult to see additional results remain and 
so we briefly mention a few of these. The interpretation of $\fone(n)$ and $\fone(m,n)$ is as the
difference of non-negative counts, but it appears that $\fone(m,n)$
is always non-negative. We leave it as an open problem to
prove that $\fone(m,n)$  
is non-negative and to give a combinatorial
interpretation that clearly demonstrates this. Theorem 1.2 gives the asymptotics of $\ftwo(n)$ and $\fthree(n)$.
One could also ask for the asymptotics of 
$\fone(m,n)$, $\ftwo(m,n)$, and $\fthree(m,n)$, as well as how these
ranks are asymptotically distributed as $n\rightarrow \infty$. Also one
could introduce and study the moments of these rank functions.

\end{document}